\documentclass[usenames,dvipsnames,12pt,english]{amsart}

\usepackage{tcolorbox}
\usepackage{xcolor}
\usepackage{extpfeil}
\usepackage{todonotes}
\usepackage[unicode]{hyperref}
\usepackage{wasysym}
\allowdisplaybreaks

\usepackage{pdfsync}
\usepackage{geometry}
\usepackage{mathscinet}

\usepackage{lscape}
\usepackage{graphicx}
\usepackage{epstopdf}
\DeclareGraphicsRule{.tif}{png}{.png}{`convert #1 `dirname #1`/`basename #1 .tif`.png}

\usepackage[all,cmtip]{xy}

\usepackage{amsmath,amsthm,amssymb,enumerate}
\usepackage{latexsym}
\usepackage{amscd}
\usepackage{amssymb}
\usepackage{enumitem}
\usepackage{mathtools}
%\usepackage{breqn}
% this is what destroys superscrips on xymatrix arrows

\numberwithin{equation}{section}

\theoremstyle{plain}  % These will have emph text
\newtheorem{thm}[equation]{Theorem}
\newtheorem{prop}[equation]{Proposition}

\newtheorem{lemma}[equation]{Lemma}

\theoremstyle{definition}  % These will have plain text
\newtheorem{defin}[equation]{Definition}

\newtheorem{remark}[equation]{Remark}

\newenvironment{numequation}
{
\begin{equation}}
{\end{equation}
}%%%%\vspace{-.17in}}
% 11/19/20 The above does not work as intended because it
% fails to advance the thm/equation counter. (Why???)
% An alternative is to use align followed by \addtocounter{thm}{1}

\newcommand{\SESM}{Eilenberg-Mac\,Lane}

\newcommand{\ra}{\rightarrow}

\newcommand{\lra}{\longrightarrow}

\newcommand{\crr}[1]{{\color{black}#1}}

\newcommand{\crg}[1]{{\color{black}#1}}

\newcommand{\crb}[1]{ {\color{black} #1 }}

\newcommand{\crbr}[1]{ { \color{black} #1 }}

\newcommand{\aand}[0]{\mbox{and}\qquad  }
\newcommand{\aaand}[0]{\mbox{and}\quad  }

\newcommand{\Z}{\mathbf Z}
\newcommand{\C}{\mathbb C}

\newcommand{\cp}{\C P^\infty}

\renewcommand{\SS}{spectral sequence}
\newcommand{\SSS}{Serre spectral sequence}
\newcommand{\ASS}{Adams spectral sequence}
\newcommand{\AHSS}{Atiyah-Hirzebruch spectral sequence}
\newcommand{\UCSS}{universal coefficient spectral sequence}
\def\zt{{\mathbf Z}/2}
\def\zp{{\mathbf Z}/p}

\def\ot{\otimes}
\def\Ebar{\overline{E}}

\def\E{E ( Q_n)}
\def\k{k(n)}

\DeclareMathOperator{\Ext}{Ext}

\newcommand{\mycases}[1]{\left\{\begin{array}{ll}#1\end{array}\right.}

\allowdisplaybreaks

\begin{document}

\thanks{We thank the referee for two extraordinarily careful readings of
this paper, with a complete rewrite in between.  This is a much better paper
because of the referee.  Thanks.}

%\pdfpagewidth=8.5in \pdfpageheight=11in

\today

\title[The connective {Morava} $K$-theory of $K(\zp,2)$] {The
connective {Morava} $K$-theory of the second mod $p$
{Eilenberg-MacLane space} }

\author{Donald M. Davis} \address{Department of Mathematics, Lehigh
University\\Bethlehem, PA 18015, USA} \email{dmd1@lehigh.edu}

\author{Douglas C. Ravenel} \address{Department of Mathematics,
University of Rochester \\ Rochester, NY 14627, USA}
\email{dravenel@ur.rochester.edu}

\author{W. Stephen Wilson} \address{Department of Mathematics, Johns
Hopkins University\\Baltimore, MD 01220, USA} \email{wwilson3@jhu.edu}

\begin{abstract}
We develop tools for computing the connective n-th Morava K-theory of
spaces.  Starting with a Universal Coefficient Theorem that computes
the cohomology version from the homology version, we show that every
step in the process of computing one is mirrored in the other and that
this can be used to make computations.  As our example, we compute the
connective $n$-th Morava $K$-theory of the second mod $p$ Eilenberg-MacLane
space.
\end{abstract}

\subjclass[2020]{55N20,55N35,55P20,55P43,55Q51,55T15,55U20}

\keywords{Morava $K$-theory, Adams spectral sequence, Universal
Coefficient Theorem}

\maketitle

\tableofcontents
\section{Introduction}

Being able to compute is central to much of algebraic topology.
Computing generalized (co)homology theories of basic spaces usually
runs from difficult to impossible.  One exception has been the
extraordinary $K$-theories of Jack Morava, $K(n)_{*}(X)$.  They have a
K\"unneth isomorphism that makes them more tractable to computations
than most.

There is a connective version of Morava $K$-theories, $k(n)_{*}(X)$, and in this
paper we make some progress towards computing with this.  In particular, we
develop some tools that can be applied to this problem in general, and then
we apply them to compute the $n$th connective Morava $K$-theory of the second
mod $p$ Eilenberg-MacLane space, $K_{2} = K(\zp,2)$, where $\zp$ is the integers
modulo the prime $p$.

% The origin of this problem is now securely in the past.
% It was known from
Anderson-Hodgkin \cite{AndHod} showed that $K(1)_{*}(K_{2})$ was
trivial.  The third author searched, periodically over the decades,
for the differentials in the Atiyah-Hirzebruch spectral sequence that
would reduce the already small group\linebreak $H_{*}(K_{2};K(1)_{*})$
to zero at $p=2$.  The differentials in the {\AHSS} are the same as
those in the Adams spectral sequence, so this paper finally gives the
third author great satisfaction.  The project grew into this paper.

The main computation of the paper is to compute both $k(n)_{*}(K_{2})$ and
$k(n)^{*}(K_{2})$ as $k(n)_{*}$ (and $k(n)^{*}$) modules.  The $n=1$ case is
essential for the first and third authors' determination of $ku^{*}(K_{2})$ and
$ku_{*}(K_{2})$ for all primes in \cite{DW}.  
It was also useful in the first author's 
determination of $ko^*(K_2)$ and $ko_*(K_2)$ in \cite{D},
\crg{with the $E_2$ page of the Adams spectral sequence 
for this computed in \cite{Davis-Ext}.}

One of our main tools is obtained by combining results of Robinson and Lazarev
for computational purposes.

\begin{thm}
\label{uct}
For $X$ a space of finite type with $K(n)_{*}(X)$ finitely generated over $K(n)_{*}$,
there is a universal coefficient spectral sequence that collapses:
$$
\Ext_{k(n)_{*}}^{s,t}(k(n)_{*}(X),k(n)_{*}) \Rightarrow k(n)^{s+t}(X)
$$
\end{thm}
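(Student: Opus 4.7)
The plan is to combine the $A_\infty$-ring structure on $k(n)$ established by Lazarev with Robinson's construction of a Universal Coefficient spectral sequence over an $A_\infty$-ring spectrum, and then argue collapse using the algebraic fact that $k(n)_*$ is a graded PID.

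First, I would invoke Lazarev's theorem giving an $A_\infty$-ring structure on $k(n)$, so that $k(n)$-module spectra form a category in which standard homological algebra applies and in which one can build free resolutions of $k(n)_*(X)$ by $k(n)$-module spectra. Robinson's machinery then yields a conditionally convergent Universal Coefficient spectral sequence
$$E_2^{s,t} = \Ext_{k(n)_*}^{s,t}(k(n)_*(X), k(n)_*) \;\Longrightarrow\; k(n)^{s+t}(X),$$
which is exactly the $E_2$-page asserted in the theorem. The hypotheses of finite type on $X$ and finite generation of $K(n)_*(X)$ over the graded field $K(n)_* = \mathbb{F}_p[v_n^{\pm 1}]$ translate, via the localization $K(n) = v_n^{-1} k(n)$, into control on $k(n)_*(X)$ strong enough to produce the needed resolution by finitely generated frees and to guarantee strong convergence.

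Second, the collapse is purely algebraic. The coefficient ring $k(n)_* = \mathbb{F}_p[v_n]$ is a graded principal ideal domain, so $\Ext_{k(n)_*}^s(-,-) = 0$ for all $s \geq 2$. The $E_2$-page is therefore concentrated in the two columns $s = 0$ and $s = 1$, and every differential $d_r$ with $r \geq 2$ has either source or target in the vanishing region. Consequently $E_2 = E_\infty$ and the spectral sequence collapses.

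The main obstacle is the translation between the hypotheses we have --- finite type of $X$ and finite generation of the periodic homology $K(n)_*(X)$ --- and the hypotheses Robinson's construction actually needs on the connective module $k(n)_*(X)$. Finite generation persists after inverting $v_n$ automatically, but going the other way requires the bounded-below structure of $k(n)$ together with the fact that the $v_n$-torsion in $k(n)_*(X)$ is bounded, which in turn follows from the finiteness assumption on $K(n)_*(X)$ combined with the cofiber sequence controlling $v_n$-multiplication. Once this translation is carried out, the PID argument above closes the proof with no further work.
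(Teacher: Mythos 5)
Your proposal follows essentially the same route as the paper: existence of the spectral sequence via the $A_\infty$ structure on $k(n)$ (Lazarev, Robinson) together with Robinson's Universal Coefficient theorem, and collapse because the $E_2$-term is concentrated in filtrations $s=0,1$ --- the paper gets this by decomposing $k(n)_*(X)$ into a free $k(n)_*$-module plus cyclic torsion modules $TP_k[v]$ and computing $\Ext$ directly, which is just your graded-PID argument made explicit. One caveat: your side claim that the $v$-torsion in $k(n)_*(X)$ is bounded is false in general (for $X=K_2$ the main theorem produces summands $TP_{r(j)}[v]$ with $r(j)$ unbounded), but nothing in the argument requires it; bounded-below, degreewise finite $k(n)_*(X)$ and projective dimension at most one over $k(n)_*=\zp[v]$ are all that is needed for the two-column collapse and convergence.
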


In \cite{Rob-UCT} Alan Robinson created the {\UCSS} for homology
theories satisfying certain hypotheses. These were shown to be
satisfied by $k (n)_{*}$ by him in \cite{Rob-kn} and later by Andrey
Lazarev in \cite{Laz-kn}.  We will show the {\UCSS } collapses in this case.

From this result, we derive the next important tool.

\begin{thm}[{\bf The Pairing}] \label{pairing}
For $X$ a space of finite type
with $K(n)_{*}(X)$ finitely generated over $K(n)_{*}$, there is a
differential $d^r(\alpha ) = v^r \beta $ in the {\ASS} for $k(n)^{*}(X)$
if and only if there is a corresponding $d_r(\beta ') = v^r \alpha '$
in the {\ASS} for $k(n)_{*}(X)$, with $|\alpha |=|\alpha '|$ and $|\beta
| = |\beta '|$.
\end{thm}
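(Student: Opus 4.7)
The plan is to use the collapsed UCT (Theorem \ref{uct}) to translate the $k(n)_*$-module structure of $k(n)_*(X)$ directly into that of $k(n)^*(X)$, and then match the differentials of the two spectral sequences via this translation.

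First I would exploit that $k(n)_* = \mathbb{F}_p[v]$ is a graded PID and that $k(n)_*(X)$ is finitely generated, to decompose $k(n)_*(X)$ into cyclic summands, namely free pieces $\Sigma^d k(n)_*$ and torsion pieces $\Sigma^d (k(n)_*/v^r)$. I then read off from the ASS for $k(n)_*(X)$ the following dictionary: each differential $d_r(m') = v^r q'$ corresponds to a unique torsion summand $\Sigma^{|q'|}(k(n)_*/v^r)$---the surviving class $q'$ is its generator, and the disappearance of $m'$ forces $v^r q' = 0$---and conversely every torsion summand arises from a unique such differential. Dualizing the two-step free resolution of each torsion summand gives
\[
\Ext_{k(n)_*}^1(\Sigma^d (k(n)_*/v^r), k(n)_*) \cong \Sigma^{-d-r|v|}(k(n)_*/v^r),
\]
and the collapsed UCT places a copy of this cyclic torsion module inside $k(n)^*(X)$.

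Such a cyclic torsion summand of $k(n)^*(X)$ can only be built in the cohomology ASS by a single differential of length $r$ of the form $d^r(q) = v^r m$, with $m$ the surviving generator and $q$ the killer; this yields the one-to-one correspondence between differentials in the two ASS's asserted by the theorem. To identify $m \leftrightarrow q'$ and $q \leftrightarrow m'$ as pairs under the UCT with the stated equality of degrees, I would view both spectral sequences as $v$-Bockstein sequences arising from the cofibration $\Sigma^{|v|}k(n) \xrightarrow{v} k(n) \to H\mathbb{F}_p$ smashed with $X$ (for homology) or mapped from $X$ (for cohomology); since the UCT is natural with respect to this cofibration, the UCT pairing respects the higher $v$-Bocksteins.

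The main obstacle I expect is the bookkeeping of degrees: the Ext resolution introduces a shift by $r|v|$, the Adams/Bockstein indexing introduces a shift of one inside each differential, and homological and cohomological gradings must be reconciled. Only after these are handled consistently does the Ext isomorphism above combine with the natural pairing of Bocksteins to give precisely $|q|=|q'|$ and $|m|=|m'|$. The collapse of the UCT from Theorem \ref{uct} is crucial here: it ensures that the two bigradings---one from the ASS and one from $\Ext$---actually agree, so no hidden extension complicates the degree matching.
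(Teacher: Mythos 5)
Your argument is essentially the paper's own proof: a length-$r$ differential corresponds to a cyclic $v$-torsion summand $TP_r[v]$, the collapsed UCT (via $\Ext^1$ over the graded PID $P[v]$) transfers that summand between $k(n)_*(X)$ and $k(n)^*(X)$ with the expected degree shift, and translating back into the other ASS yields the mirrored differential with matching degrees. The extra $v$-Bockstein/naturality step is not needed, since the theorem only asserts equality of degrees, which already follows from the $\Ext$ degree bookkeeping you describe.
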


It is the interaction between $k(n)$ cohomology and homology from
these two results that allows us to do our computation.  Theorem
\ref{uct} gives a duality of sorts between $k(n)_{*}(X)$ and
$k(n)^{*}(X)$, but Theorem \ref{pairing} goes even further and says that
there is a duality every step of the way in the computation.  In our
case, we have that $K_{2}$ is an H-space so both Adams spectral
sequences are multiplicative.  Although this does not give us a Hopf
algebra, there is enough similarity in the structure that we can make
good use of it.

The plan of the paper is to state the results of the main computation
in the next section.  We set up some notation in Section
\ref{notation}.  In Section \ref{qhomsec} we compute the $E_{2}$ term of
the {\ASS} for $k(n)^{*}(K_{2})$.  In Section \ref{sec-illustration} 
we illustrate its behavior for $n=2$.  We give some necessary definitions
and numbers in Section \ref{numdef}. In Section \ref{prelim}, we prove
the two theorems in the introduction and establish some other
preliminaries we need.  All the hard work is done in Section
\ref{thess} where the differentials are computed.  The results for
$k(n)_{*}(K_{2})$ are all collected in Section \ref{alldual} and the final
section is devoted to describing the results at $p=2$.

\section{Statement of results}

In this section we define only what we need to efficiently
state the results of our main computation of $k(n)^{*}(K_{2})$.
Many details will be properly developed later.

\subsection{ Basic notation.} All our cohomology and homology groups will be
mod $p$.  The connective $n$th Morava $K$-theory spectrum, $k(n)$, has
$k(n)^{*} = \zp[v_{n}]$ with\linebreak $| v_{n} | =\crr{-}2(p^{n} -1)$.

% \todo[inline,color=green]{6/17/24  The use of $v_{n}$ rather than $v$ is
% ubiquitous, so I am going with it.}%\bigskip

We let $P(x)$, $E(x)$, and $\Gamma(x)$ be the polynomial, exterior,
and divided power algebras on $x$ (which could be a single generator
or a set of generators) over $\zp$.  In addition, we need the
truncated polynomial algebra, $T_k(x) := P(x)/(x^k)$, and its dual,
$\Gamma_k(x)$.

The divided power algebra $\Gamma (x)$ for a single $x$ is additively
generated by elements $\gamma_{i} (x)$ for $i\geq 0$ (the divided
powers of $x$) with $|\gamma_{i} (x) |=i|x|$ and
\begin{displaymath}
\gamma_{i} (x)\gamma_{j} (x)=\binom{i+j}{i}\gamma_{i+j}(x).
\end{displaymath}
As an  algebra, $\Gamma(x)$ is $
T_p(\gamma_{p^m}(x):m\geq 0)$.  For $p=2$, this is an exterior
algebra.

For a rational number $x$, $\lfloor x \rfloor$, the
{\em floor of $x$}, denotes the largest integer not exceeding $x$, and
$\lceil x \rceil$, the {\em ceiling of $x$}, denotes the smallest
integer not exceeded by $x$.

For a graded connected $\Z/p$-algebra $A$, we let $\overline{A}$
denote the augmentation ideal of $A$, its \crr{vector} space of
positive degree elements.

\bigskip

\subsection{ The mod $p$ cohomology of $K_{2}:=K (\Z/p,2)$. }In what follows, {\em all tensor products are over $\Z/p$ unless
otherwise stated}.

To compute with the {\ASS}, we need (for $p$ an odd prime)
\begin{align}
\begin{split}
\label{coho}
H^{*}K_{2} &=
P(\iota_{2})\otimes  P(z_i:i>0 ) \otimes  E(u_i:i\geq 0 ) \\
\mbox{with}\qquad
|\iota_{2}|& = 2,\qquad
| z_i | =
 2p^{i} +2 \qquad\mbox{and}\qquad   | u_i | =  2p^{i} + 1.
% H^{*}K_{2} &=
% P(\iota_{2}) \underset{i > 0}{\ot} P(z_i ) \underset{i \ge 0}{\ot} E(u_i ) \\
% | z_i |& =
%  2(p^{i} +1) \qquad | u_i | =  2p^{i} + 1
\end{split}
\end{align}
Let $y_{n,j} = \iota_{2}^{p^{j}}$.  In particular, $\iota_{2} = y_{n,0}$
and $\iota_{2}^{p} = y_{n,0}^{p} = y_{n,1}$.  In general, $y_{n,j}^{p} = y_{n,j+1}$
with $| y_{n,j} | = 2p^{j}$.

For $p=2$, $H^{*}K_{2}$ has a similar description but with
$u_{i}^{2}=z_{i+1}$.  We will say more about this in Section
\ref{mod2}.

We define $w_{n,i}\in H^{*}K_{2}$ for $i\geq 0$ by
% Defined in ~/math/diary/diary.sty
\begin{equation}\label{eq-w}
\begin{split}
w_{n,i}:=\mycases{
u_{n}
       &\mbox{for }i=0\\
u_{n+i} - u_{n-i} z_{i}^{p^{n} - p^{n-i}}
       &\mbox{for }0< i\leq n\\
 y_{n,i-n-1/2}z_{i}^{p^{n}-1}\\
 \qquad =w_{n,i-n-1}y_{n,i-n-1}^{p-1}z_{i}^{p^{n}-1}
       &\mbox{for }i\geq n+1,
}
\end{split}
\end{equation}%

\noindent where
\begin{equation}
\label{zhalf}
% w_{n,i+1/2} : = y_i^{p-1} w_{n,i} \qquad \mbox{for integers $i\geq 0$} .
y_{n,i+1/2} : = y_{n,i}^{p-1} w_{n,i} \qquad \mbox{for integers $i\geq 0$} .
\end{equation}

\crg{
In general, all our variables, such as $n,i,j,k,s$, are non-negative 
integers.  The number $1/2$ arises often, and should be clear from context.}

In Section \ref{notation} we will see that there is an {\ASS}
converging to $k (n)^{*}K_{2}$ for which the input is $k
(n)^{*}\otimes H^{*} K_{2}$.  It is indexed in such a way
that
\begin{itemize}
\item [$\bullet$] the filtration and dimension of $v_{n}$ are 1 and
$-2 (p^{n}-1)$,

\item [$\bullet$] the elements of $H^{*}K_{2}$ have their usual
positive degrees and Adams filtration 0,  and

\item [$\bullet$] differentials $d^{r}$ raise rather than lower degree
by 1, while \crr{raising} filtration by $r$.
\end{itemize}

In Section \ref{qhomsec} we will see that the action of the
Milnor primitive $Q_{n}$ on $H^{*}K_{2}$ \crr{gives us $d^1(x)=v_n Q_n(x)$.
From this we get}
% Defined in ~/math/diary/diary.sty
\begin{numequation}\label{eq-d1ui}
\begin{split}
d^{1} (y_{n,0})
 & = v_{n}u_{n}\\
\aand d^{1} (u_{s})
 & = \mycases{
v_{n}z_{n-s}^{p^{s}}
       &\mbox{for }0\leq s<n\\
0      &\mbox{for }s=n\\
v_{n}z_{s-n}^{p^{n}}
       &\mbox{for }s>n.}
\end{split}
\end{numequation}%

\noindent This implies that for $w_{n,s}$ as in (\ref{eq-w}), $d^{1}
(w_{n,s}) = 0$ for $0\leq s\leq n$.  We can regard $w_{n,s}$ for such
$s$ as a substitute for $u_{n+s}$ \crb{that survives to $E_{2}$}.

In Section \ref{thess} for $p$ odd and Section \ref{mod2} for $p=2$,
we will see that there are higher Adams differentials
% Defined in ~/math/diary/diary.sty
\begin{numequation}\label{eq-higher}
\begin{split}
d^{\rho_{n}(i)}(y_{n,i})
 & = v_{n}^{\rho_{n}(i)}w_{n,i}\\
\aand
d^{\rho_{n}(i+1/2)}( y_{n,i+1/2})
 & = v_{n}^{\rho_{n}(i+1/2)} z_{n+i+1}
\end{split}
\end{numequation}%

\noindent for integers $i\geq 0$, where the numbers $\rho_{n} (i)$ and
$\rho_{n} (i+1/2)$ are given in Lemma \ref{lem-ab}.  The latter are
uniquely determined by the dimensions of the elements in question. For
integers $0\leq i\leq n$, they are
\begin{displaymath}
\rho_{n} (i)=p^{i}
\qquad \aand
\rho_{n} (i+1/2)= (p-1) p^{i}.
\end{displaymath}

\noindent
In particular $\rho _{n} (0)=1$, so the
first differential of (\ref{eq-higher}) for $i=0$ coincides with the
first differential of (\ref{eq-d1ui}).

\subsection{ The effect of the Adams $d^{1}$.}\label{subsec-d1}

  {\em The additional $d^{1}$s of
(\ref{eq-d1ui}) make the passage from $E_{1}$ to $E_{2}$ more
complicated than the passage to higher terms brought about by the
higher differentials of (\ref{eq-higher}).} We will outline these
processes here in order to motivate the complicated expressions in
Theorem \ref{knkn}, our main computational result.

In order to work out the
implications of (\ref{eq-d1ui}), the following additive isomorphisms
and definitions for each positive $n$ and $i\geq 0$ are convenient.
% Defined in ~/math/diary/diary.sty
\begin{numequation}\label{eq-convenient}
\begin{split}
P (y_{n,i})
 & \phantom{:}\cong T_{p} (y_{n,i})\otimes P (y_{n,i+1}),   \\
E (u_{s}:s\geq 0)
 & \phantom{:}\cong EE_{n}\otimes E (w_{n,0})\otimes  W_{n,0},
   \mbox{ where}   \\
EE_{n}
 & := E (u_{s}:0\leq s<n)\otimes E (u_{\crr{2}n+s}:s>0)
\qquad \mbox{ and }\\
W_{n,i}
 & :=E (w_{n,i+s}:1\leq s\leq n)
     \quad \mbox{for  $w_{n,i+s}$ as in (\ref{eq-w}),} \\
P (z_{s}:s> 0)
 & \phantom{:}\cong L_{n}\otimes TZ_{n,0}\otimes PZ_{n}, \mbox{ where}   \\
L_{n}
 &: = \bigotimes_{0<s <n}T_{p^{n-s }} (z_{s }),\\
TZ_{n,i}
 & := T_{p^{n}} (z_{n+s}:s>i),\qquad \mbox{  and }\\
PZ_{n} &:= P (z_{s}^{e_{n} (s)}:s>0)\qquad \mbox{with }
 e_{n} (s)
 :=\mycases{
p^{n-s}    &\mbox{for }0 < s \leq n\\
p^{n}      &\mbox{for }s>n.
}
\end{split}
\end{numequation}%

\noindent We will make use of these $W_{n,i}$ and $TZ_{n,i}$ for $i>0$
later.

\crr{
\begin{remark}
Although stated as additive isomorphisms, much of the algebra structure is
preserved and we need it.  For example, the additive isomorphism for
$P(y_{n,i})$ comes from the multiplicative extension
\[
%\Z/p \lra
P(y_{n,i+1}) \lra P(y_{n,i}) \lra T_p(y_{n,i}), 
%\lra \Z/p,
\]
\end{remark}  }

\noindent \crb{meaning that $P(y_{n,i})$ is a free module over the subring
$P(y_{n,i+1})$ and
\begin{displaymath}
T_p(y_{n,i}) = P(y_{n,i})\otimes_{P(y_{n,i+1})}\Z/p.
\end{displaymath}

\noindent

} Here, if we have $d^r(y_{n,i}) \neq 0$, we have
$d^r(y_{n,i}^p=y_{n,i+1}) = 0$, but this requires the multiplicative
structure.  Similarly, we have
\[
%0 \lra
PZ_n \lra P(z_s: s > 0) \lra L_n \otimes TZ_{n,0}.%}
% \lra 0.
\]
%}  EDIT

Putting this all together, we have
% Defined in ~/math/diary/diary.sty
\begin{numequation}\label{eq-all}
\begin{split}
H^{*}K_{2}
 & \cong   T_{p} (y_{n,0})\otimes P (y_{n,1}) \otimes EE_{n}
  \otimes E (w_{n,0})\otimes W_{n,0}
   \otimes L_{n}\otimes TZ_{n,0} \otimes PZ_{n}.
\end{split}
\end{numequation}%

\crr{ We see in (\ref{eq-bigraded2}) that the $d^{1}$ of
(\ref{eq-d1ui}) on $k(n)^*$ tensored with (\ref{eq-all}) is confined
to $k(n)^* \otimes D_{1}$ with
\begin{numequation}\label{onlyd1}
D_{1}\coloneqq  T_{p} (y_{n,0})
  \otimes E(w_{n,0})
\otimes EE_{n}
    \otimes PZ_{n}.
\end{numequation}%

\noindent
The $d^1$ homology of $k(n)^*\otimes T_p(y_{n,0}) \otimes E(w_{n,0})$ is just
$k(n)^* \otimes E(y_{n,1/2}) \oplus T_{p-1}(y_{n,0}) \otimes \overline{E(w_{n,0})}$.
This is illustrated in the top diagram of
(\ref{eq-diagrams}) for $j=0$, where $\rho_{n} (0)=1$.
The $d^1$ homology of $k(n)^* \otimes EE_n \otimes PZ_n$ is
$k(n)^* $ plus elementary $v_n$ torsion elements.  Combining these results is tricky so
we just observe that
the elementary $v_n$-torsion in (\ref{onlyd1}) is the image of $Q_n$, i.e. $Q_n(D_{1})$.
If we remove the elements in $E(y_{n,1/2})$ from (\ref{onlyd1}), what remains is free over
$E(Q_n)$.

It is straightforward to compute
the Poincar\'e series from this information.
Although this does not give an explicit base,
it isn't hard to filter it and get a basis for the associated graded version.
For example, for $EE_n\otimes PZ_n$, we would have
\begin{numequation}\label{assoc-graded}
\begin{split}
\bigoplus_{0 <  k \le n}
   & E(u_i:0 \le i < n-k)\otimes E(u_i: 2n < i )
    \otimes \overline{P(z_k^{e_n(k)})} \otimes P(z_i^{e_n(i)}: i>k) \\
\crb{\oplus }
\bigoplus_{2n < k }
   & E(u_i: i > k) \otimes \overline{P(z_{k-n}^{p^n})}
             \otimes P(z_i^{p^n}: i>k-n)
\end{split}
\end{numequation}%
}

\noindent \crr{We} define
% Defined in ~/math/diary/diary.sty
\begin{numequation}\label{eq-SM0}
\begin{split}
S_{n,0}
 &:= \crr{ Q_n(D_{1})} \\
M_{n,i}
 &: = P (y_{n,i+1})\otimes W_{n,i}\otimes L_{n}\otimes TZ_{n,i}
\qquad \mbox{for $i\geq 0$}.\\
M_{n,i+1/2}
 & :=P (y_{n,i+1})\otimes W_{n,i}
           \otimes L_{n}\otimes  TZ_{n,i+1}\quad \mbox{for }i\geq 0 ,\\
S_{n,i}
 &:= T_{\rho_{n}(i)}(v_{n})\ot T_{p-1}(y_{n,i})
      \ot \overline{E(w_{n,i})} \\
 & \phantom{:} = T_{\rho_{n}(i)}(v_{n})\ot
\left\{w_{n,i}y_{n,i}^{s}:0\leq s\leq p-2 \right\}
  \quad \mbox{for }i>0 ,\\
\aand
S_{n,i+1/2}
 &:= T_{\rho_{n}(i+1/2)}(v_{n})\ot  \overline{T_{p^{n}}(z_{n+i+1})} \\
 &\phantom{:} = T_{\rho_{n}(i+1/2)}(v_{n})\ot
 \left\{z_{n+i+1}^{s}:1\leq s< p^{n} \right\} \quad \mbox{for }i\geq 0.
\end{split}
\end{numequation}%

\noindent {\em These will figure in Theorem \ref{knkn}.}
\crbr{Using this notation, we have computed the elementary $v_n$-torsion
as $S_{n,0}\otimes M_{n,0}$.}

\subsection{ The effect of higher Adams differentials.}  The higher
differentials of (\ref{eq-higher}) are easier to deal with since each
is nonzero on a single multiplicative generator.  They are illustrated
in the diagrams of (\ref{eq-diagrams}) below.

The vector spaces $S_{n,i}$ and $S_{n,i+1/2}$ of (\ref{eq-SM0}) can
also be written as
\begin{align*}
S_{n,i}
 & = d^{\rho_{n} (i)}\left(\overline{T_{p} (y_{n,i})} \right)/
                    v_{n}^{\rho_{n} (i)}   \\
\aand
S_{n,i+1/2}
 & = d^{\rho_{n} (i+1/2)}\left(\overline{E (y_{n,i+1/2})}\crr{\otimes T_{p^n-1}(z_{n+i+1})} \right)
           /v_{n}^{\rho_{n} (i+1/2)}.
\end{align*}

We can now state the main computational result of this paper.

\begin{thm}\label{knkn}
For an odd prime $p$,   $\k^{*}(K_{2})$
has the following three summands as a $k(n)^{*}$-module:
\begin{enumerate}[label={(\roman*)},itemindent=1em]

\item \label{knkni} The {\em $k (n)^{*}$ free summand},
$k(n)^{*}\otimes L_{n}$, for $L_{n}$ as in (\ref{eq-convenient}).

\item \label{knknii}  The {\em higher torsion summand},
\begin{displaymath}
 \bigoplus_{\ell >0}\left( M_{n,\ell /2}\otimes  S_{n,\ell /2} \right),
\end{displaymath}

\noindent for $M_{n,\ell /2}$ and $S_{n,\ell /2}$ as in (\ref{eq-SM0}).

\item \label{knkniii} The {\em elementary torsion summand},
$S_{n,0}\otimes M_{n,0}$ as in (\ref{eq-SM0}).

\end{enumerate}
\end{thm}

\begin{remark}
{\bf The $v_{n}$-torsion free summand.}
Inverting $v_{n}$ kills all but the first summand of $k (n)^{*}K_{2}$,
which becomes $K(n)^{*}(K_{2})$, as described in \cite[dual to Theorem
11.1]{RW:CF}.  This $k (n)^{*}$-free summand is all that appears in negative
degrees, where it is finite in each degree.  In addition, every
positive degree is also finite.
\end{remark}

% \noindent a tensor factor of the elementary torsion summand.  However
% it is not a subgroup of it since it gets tensored with $d^{1}
% (\overline{EE}) /v_{n}$, which does not contain the unit class. Thus
% the elements $z_{n+i}$ for $i>0$ are not killed by $v_{n}$, but only
% by $v_{n}^{\rho_{n} (i-1/2)}$.
% \end{remark}

\begin{remark}\label{rem-1}
{\bf The multiplicative structure.} Theorem \ref{knkn} describes a
ring as well as a $k (n)^{*}$-module, but we can only show that the
ring structure is that of the Adams $E_{\infty }$-term. We cannot rule
out nontrivial multiplicative extensions.  For $n>2$, we cannot show
by dimensional arguments that $v_{n}z_{n+i}^{p^{n}}=0$ for $i>0$.
Let
\begin{displaymath}
\kappa_{n} = \prod_{0<i<n}z_{n-i}^{p^{i}-1},
\end{displaymath}

\noindent the top class in $L_{n}$.  We can show by induction on $n$
that $|\kappa_{n}|=2 (p^{n}-1) (n-1)$ using the identity
\begin{displaymath}
\kappa_{n+1}=\kappa_{n}^{p} (z_{1}z_{2}\cdots z_{n})^{p-1}.
\end{displaymath}

\noindent We cannot rule out the
multiplicative extension
\begin{displaymath}
v_{n}z_{n+i}^{p^{n}}
  = v_{n}^{n-1}\kappa_{n}z_{2n+i}
\end{displaymath}

\noindent (note that $|v_{n}z_{n+i}^{p^{n}}|=|z_{2n+i}|$ and
$|v_{n}^{n-1}\kappa_{n}|=0$) for $n>2$ and $i>0$.
\end{remark}

\section{Our Adams spectral sequence notation}
\label{notation}

The $k(n)$ under consideration here is the the connective version of
Morava's $n$th extraordinary $K$-theory $K (n)$.  We have
\begin{align*}
\pi_{*}k(n)& = P(v_{n}) && \mbox{with }|v_{n}| = 2(p^{n}-1)    \\
%k(n)^{*}& = P(v_{n}) && \mbox{with }|v_{n}| = -2(p^{n}-1)    \\
% \mbox{and}\qquad
% k(n)_{*}& = P(v_{n}) && \mbox{with }|v_{n}| = 2(p^{n}-1) \\
% \mbox{with}\qquad
\aand H^{*}(k(n))& = \mathcal{A}/\mathcal{A} (Q_n),
\end{align*}
where $\mathcal{A}$ is
the mod $p$ Steenrod algebra and $Q_n$ is the $n$th Milnor primitive.

% \todo[inline,color=green]{7/2/24. In this treatment, $v_{n}\in
% E_{2}^{1,2p^{n}-1}$ in both the cohomological and homological cases.
% In the notation of (\ref{eq-cohom-case}) and (\ref{eq-hom-case}) we
% have $v_{n}\in G^{2}_{2-2p^{n},1}$ in cohomology and $v_{n}\in
% G_{2}^{2p^{n}-2,1}$ in homology.}\bigskip

We have
% Defined in ~/math/diary/diary.sty
\begin{numequation}\label{eq-kn-groups}
\begin{split}
k (n)^{i}X:=[X, k (n)]_{-i}=\pi_{-i}F[X,k (n)]
\qquad \aand
k (n)_{i}X:=\pi_{i} (k (n)\wedge X).
\end{split}
\end{numequation}%

\noindent Given $\alpha \in k (n)_{i}X$ represented by a map $S^{i}\to
k (n)\wedge X$, and $\beta \in k (n)^{j}X$ represented by a map $X\to
\Sigma^{j}k (n)$, we get an element $\langle \alpha ,\,\beta
\rangle\in \pi_{i-j}k (n)$, which is the composite
\begin{displaymath}
\xymatrix
@R=4mm
@C=15mm
{
{S^{i}}\ar[r]^(.4){\alpha }
  &{k (n)\wedge X}\ar[r]^(.42){k (n)\wedge \beta }
    &{k (n)\wedge \Sigma^{j}k (n)}\ar[r]^(.58){m}
      &{\Sigma^{j}k (n),}
}
\end{displaymath}

\noindent where $m:k (n)\wedge k (n)\to k (n)$ is the multiplication in the
ring spectrum $k (n)$.

The groups of (\ref{eq-kn-groups}) can be computed with the {\ASS} as
follows.  In the above, $k (n)$ is the spectrum representing
connective Morava K-theory and $F (X,Y)$ denotes the function spectrum
for maps of spectra $X\to Y$.  The ring structure on $k (n)$ allows us
to extend the map $S^{|v_{n}|}\to k (n)$ representing $v_{n}$ to the
map $v_{n}$ in the fiber sequence
% Defined in ~/math/diary/diary.sty
\begin{numequation}\label{eq-fiber-seq}
\begin{split}
\xymatrix
@R=4mm
@C=10mm
{
{\Sigma^{|v_{n}|}k (n)}\ar[r]^(.6){v_{n}}
  &{k (n)}\ar[r]^(.5){j}
    &{H/p}\ar[r]^(.38){\delta }
      &{\Sigma^{2p^{n}-1}k (n)}
}
\end{split}
\end{numequation}%

\noindent where $H/p$ is the mod $p$ {\SESM} spectrum.  The composite
% Defined in ~/math/diary/diary.sty
\begin{numequation}\label{eq-Qn}
\begin{split}
\xymatrix
@R=4mm
@C=10mm
{
{H/p}\ar[r]^(.38){\delta }
  &{\Sigma^{2p^{n}-1}k (n)}\ar[r]^(.5){j}
     &{\Sigma^{2p^{n}-1}H/p}
}
\end{split}
\end{numequation}%

\noindent   is the Milnor primitive operation operation $Q_{n}$.

With the maps in  (\ref{eq-fiber-seq}) we can
construct the following {\em Adams diagram for $k (n)$},
\begin{numequation}\label{eq-kn-Adams-diagram}
\begin{split}
\xymatrix
@R=8mm
@C=12mm
{
{k (n)}\ar[d]_(.5){j}
  &{\Sigma^{|v_{n}|}k(n)}\ar[d]_(.5){j}
        \ar[l]_(.5){v_{n}}
    &{\Sigma^{2|v_{n}|}k(n)}\ar[d]_(.5){j}
        \ar[l]_(.5){v_{n}}
      &{\Sigma^{3|v_{n}|}k(n)}\ar[d]_(.5){j}
        \ar[l]_(.5){v_{n}}
        &{\dotsb }\ar[l]^(.5){}\\
{H/p}
  &{\Sigma^{|v_{n}|}H/p}
    &{\Sigma^{2|v_{n}|}H/p}
      &{\Sigma^{3|v_{n}|}H/p.}
}
\end{split}
\end{numequation}%

\noindent   Each fiber sequence
\begin{displaymath}
\Sigma^{(s+1) |v_{n}|}k (n)
  \to  \Sigma^{s|v_{n}|}k (n)
  \to  \Sigma^{s|v_{n}|}H/p
\end{displaymath}

\noindent leads to a long exact sequence of homotopy groups.  The same
is true if \crr{we} apply either the functor $F (X,-)$, the {\em cohomological
case}, or $(X\wedge -)$, the {\em homological case}, to
(\ref{eq-kn-Adams-diagram}).

In each case these long exact sequences assemble into an exact couple
(see \cite[\S2.1]{Rav:MU}) leading to a {\SS} $\left\{E^{s,t}_{r}
\right\}$, where
\begin{itemize}
\item [$\bullet$] $E_{1}^{s,t}$ is either $\pi_{t-s} (F ( X,
\Sigma^{s|v_{n}|}H/p))=H^{s (2p^{n}-1)-t}X$, the indicated mod $p$
cohomology group of $X$, or $\pi_{t-s} (X\wedge
\Sigma^{s|v_{n}|}H/p)=H_{t-s (2p^{n}-1)}X$, the indicated mod $p$
homology group of $X$.

\item [$\bullet$] $E_{2}^{s,t}$ is either $\Ext_{E
(Q_{n})}^{s,t}\left(\Z/p, H^{*}X\right)$ or $\Ext_{E
(Q_{n})}^{s,t}\left(H^{*}X, \Z/p\right)$.  This can be derived from
(\ref{eq-Qn}).

\item [$\bullet$] $d_{r}:E_{r}^{s,t}\to E_{r}^{s+r,t+r-1}$.  The
filtration index $s$ is raised by $r$ and the dimension index $t-s$ is
lowered by one.

\item [$\bullet$] $E_{\infty }^{s,t}$ is a certain subquotient of
either  $k (n)^{s-t}X$ or $k (n)_{t-s}X$.
\end{itemize}

\noindent These are the {\em classical {\ASS}s for $k (n)^{*}X$ and
$k (n)_{*}X$.}  It is common to depict them in a chart in which
$E_{r}^{s,t}$ has Cartesian coordinate $(t-s,s)$. Thus $d_{r}$ is an
arrow lowering the first coordinate by 1 and raising the second by
$r$, making it a line with slope $-r$.

The {\em {\ASS} for $\k^{*}(K_{2})$} has
\begin{displaymath}
E_{2}^{s,t} =
\Ext_{\mathcal{A}}^{s,t}(H^{*}(\k),H^{*}K_{2})\cong
\Ext_{\E}^{s,t}(\zp,H^{*}K_{2}) \implies \k^{-(t-s)}(K_{2}).
\end{displaymath}

\noindent We use the usual grading for the {\ASS} so that $E_r^{s,t}$
is displayed with Cartesian coordinates $(t-s,s)$, but then we give
the negative $x$-axis positive degrees, rewriting $E_r^{s,t}$ as
$G^r_{s-t,s}$ in position $(s-t,s)$.  We use $d^r$ for our cohomology
differentials.  {\em In this depiction, differentials raise rather
than lower the first coordinate by 1.}

We also have the {\em {\ASS} for $\k_{*}(K_{2})$}, and need to have
distinct notation to clearly separate it from the cohomology notation.
It has
\begin{displaymath}
E_{2}^{s,t} =
\Ext_{\mathcal{A}}^{s,t}(H^{*}(\k\wedge K_{2}),\zp)
\cong \Ext_{\E}^{s,t}(H^{*}K_{2},\zp)
\implies \k_{t-s}(K_{2}).
\end{displaymath}
We use the usual grading for the {\ASS} so that $E_r^{s,t}$
is at the Cartesian $(t-s,s)$.  Here we don't need the negative
grading, but to distinguish this from the cohomology {\ASS}, we write
$E_r^{s,t}$ as $G_r^{t-s,s}$ in position $(t-s,s)$.  Here we use $d_r$
for the differential so we can keep track of which is which.

To summarize, in the cohomological case
% Defined in ~/math/diary/diary.sty
\begin{numequation}\label{eq-cohom-case}
\begin{split}
G^{r}_{x,y}&:= E_{r}^{y,y-x}\qquad \mbox{with differentials }
  d^{r}: G^{r}_{x,y}\to G^{r}_{x+1,y+r},
\end{split}
\end{numequation}%

\noindent and in the homological case,
% Defined in ~/math/diary/diary.sty
\begin{numequation}\label{eq-hom-case}
\begin{split}
G_{r}^{x,y}
&:= E_{r}^{y,y+x}\qquad \mbox{with differentials }
  d_{r}: G_{r}^{x,y}\to G_{r}^{x-1,y+r}.
\end{split}
\end{numequation}%

We need the $\E$-module structure of $H^{*}X$ (which we will describe
in the next section) in order to compute the $E_{2}$-terms.  Any
$\E$-module $M$ is the sum of a free module and $\Z/p$-vector space on
which $Q_{n}$ acts trivially.  As a result, it is easy to compute the
relevant Ext groups. We have
% Defined in ~/math/diary/diary.sty
\begin{numequation}\label{eq-Ext-Qn}
\left\{\begin{split}
\Ext_{E (Q_{n})}^{*,*}\left(\Z/p, \Z/p\right)
 & = P (v_{n})  \qquad \mbox{with }v_{n}\in \Ext_{}^{1,2p^{n}-1},\\
\Ext_{E (Q_{n})}^{s,t}\left(E (Q_{n}), \Z/p\right)
 & = \mycases{
\Z/p   &\mbox{for } (s,t)= (0,0)\\
0      &\mbox{otherwise,}
}\\
\aand
\Ext_{E (Q_{n})}^{s,t}\left(\Z/p,  E (Q_{n})\right)
 & = \mycases{
\Z/p   &\mbox{for } (s,t)= (1,2p^{n}-1)\\
0      &\mbox{otherwise.}
}
\end{split} \right.
\end{numequation}%

\section{The $Q_n$ homology of $H^{*}K_{2}$ and the
Adams $E_{2}$ term} \label{qhomsec}

Following Tamanoi, \cite[Theorem 5.2]{Tam-knp}, we have, at odd
primes, $u_i = Q_i \iota_{2}$ and $z_i = Q_i u_{0}$ (in particular
$z_{0}=0$), giving us $H^{*}K_{2}$ as in (\ref{coho}), where $Q_{i}$
again is the $i$th Milnor primitive.  Continuing to follow Tamanoi, we
have
\begin{align*}
Q_{n}\iota_{2}
 & = u_{n}\\
Q_{n}u_{s}
 & =  \mycases{
z_{n-s}^{p^{s}}
       &\mbox{for }0\leq s<n\\
0      &\mbox{for }s=n\\
z_{s-n}^{p^{n}}
       &\mbox{for }s>n
} \\
Q_{n}z_{s}
 & = 0.
\end{align*}

To compute the $Q_{n}$ homology we filter $H^{*}K_{2}$ by powers of
the ideal
\begin{displaymath}
\left(y_{n,0},u_{s},z_{s}^{e_{n} (s)}:s\geq 0 \right),
\end{displaymath}

\noindent where $z_{0}=0$ and $e_{n} (s)$ is as in (\ref{eq-convenient}).
This means $z_{s}  \in F^{0}$ and $z_{s}^{e_{n} (s)} \in F^{1}$ for $s>0$, and
$u_{s}  \in F^{1}$  for $s\geq 0$.
The associated bigraded object $E_{0}H^{*}K_{2}$ and its
$Q_{n}$ homology are indicated in the following diagram, which uses the notation of (\ref{eq-convenient}).
% Defined in ~/math/diary/diary.sty
\begin{numequation}\label{eq-bigraded}
\begin{split}
\xymatrix
@R=0mm
@C=5mm
{
{L_{n}\otimes TZ_{n,0}}
 \ar@{~>}[r]^(.5){}
    &{L_{n}\otimes TZ_{n,0}}\\
{\otimes }
  &{\otimes }\\
{P([y_{n,0}])\otimes E([u_{n}])}\ar@{~>}[r]^(.5){}
    &{P([y_{n,1}])\otimes E\left([y_{n,1/2}] \right)}\\
{\otimes }
  &{\otimes }\\
{E([u_{s}]:0\leq s<n)\otimes
         P\left(\left[z_{s}^{p^{n-s}}\right]:0<s\leq n \right)}
                 \ar@{~>}[r]^(.5){}
      &{\Z/p}\\
{\otimes }
  &{\otimes }\\
{E([u_{2n+s}]:s>0)\otimes P\left(\left[z_{n+s}^{p^{n}}\right]:s>0 \right)}
            \ar@{~>}[r]^(.5){}
          &{\Z/p}\\
{\otimes }
  &{\otimes }\\
{E_{0}W_{n,0}}
            \ar@{~>}[r]^(.5){}
          &{E_{0}W_{n,0},}
}
\end{split}
\end{numequation}%
\noindent where elements enclosed in square brackets are in
$E_{0}H^{*}K_{2}$ (where they are indecomposable) corresponding to
unbracketed elements in $H^{*}K_{2}$, which may be decomposable.  The
elements $[y_{n,1}]$ and $[y_{n,1/2}]$ are in $F^{p}$,
and the other named elements below the top row are in $F^{1}$.

For the second row of (\ref{eq-bigraded}) we have an additive isomorphism
%\begin{displaymath}
\[
P(y_{n,0})\otimes E(u_{n})
\cong T_{p} (y_{n,0})\otimes E(w_{n,0}) \otimes P(y_{n,1}),
%\end{displaymath}
\]
\noindent and the behavior of the first two factors  is
illustrated in the upper diagram of (\ref{eq-diagrams}) below for $j=0$,
where $\rho_{n} (0)=1$.
This can be done now with explicit computation, eliminating the need for
the filtration on this part.

Using the notation of (\ref{eq-convenient}), we can
consolidate the third and fourth rows of (\ref{eq-bigraded}), and
rewrite them as
\begin{displaymath}
\xymatrix
@R=0mm
@C=10mm
{
{EE_{n}\otimes PZ_{n}}\ar@{~>}[r]^(.5){}
      &{\Z/p}.
}
\end{displaymath}
Because we end up with a trivial result, we can also eliminate the
need for the filtration here as well.

Thus we can rewrite (\ref{eq-bigraded}) as
% Defined in ~/math/diary/diary.sty
\begin{numequation}\label{eq-bigraded2}
\begin{split}
\xymatrix
@R=1mm
@C=5mm
{
{L_{n}\otimes T_{p^{n}} (z_{n+i}:i>0)}
 \ar@{~>}[r]^(.5){}
  &{L_{n}\otimes T_{p^{n}} (z_{n+i}:i>0)}\\
{\otimes }
  &{\otimes }\\
\crr{{T_{p} (y_{n,0})\otimes E(w_{n,0})}}
  \ar@{~>}[r]^(.5){}
    &{ E(y_{n,1/2})}\\
{\otimes }
  &{\otimes }\\
\crr{{ P(y_{n,1})}}
  \ar@{~>}[r]^(.5){}
    &{  P(y_{n,1})}\\
{\otimes }
  &{\otimes }\\
{EE_{n}\otimes PZ_{n}}
                 \ar@{~>}[r]^(.5){}
      &{\Z/p}\\
{\otimes }
  &{\otimes }\\
{W_{n,0}}
            \ar@{~>}[r]^(.5){}
          &{W_{n,0}.}
}
\end{split}
\end{numequation}%

\begin{thm}\label{E2k}
For $G^{2}_{*,*}$ as in (\ref{eq-cohom-case}), we have elements
\begin{align*}
v_{n}
 & \in G^2_{-2(p^{n}-1),1},&
y_{n,1}
 & \in G^2_{2p,0},&
w_{n,i}
 & \in G^2_{2p^{n+i}+1,0},\\
y_{n,1/2}
 &  \in G^2_{2(p^{n}-1)+2p+1,0},&
\mbox{and } z_{j}
 & \in G^2_{2(p^{j}+1),0}.
\end{align*}

\noindent
The $E_{2}$ term of the odd primary Adams
spectral sequence for $\k^{*}(K_{2})$ is
\begin{displaymath}
P(v_{n})\otimes L_{n}
\ot P(y_{n,1})
\ot
E(y_{n,1/2})
\otimes W_{n,0}
\otimes  TZ_{n,0}
\end{displaymath}

\noindent \crbr{plus $S_{n,0}\otimes M_{n,0}$ from \S\ref{subsec-d1},
the elements
annihilated by $v_{n}$. }
\end{thm}

\begin{proof}
The $Q_n$ homology of $H^{*}K_{2}$ gives us the trivial $\E$-module part.
The rest is free over $\E$. The  $\Ext$ groups for
both kinds of modules are as in (\ref{eq-Ext-Qn}).  The result follows.
\end{proof}

\section{Illustration for $n=2$}\label{sec-illustration}

%\todo[inline,color=green]{6/19/24. This section is new.}

In this section we will sometimes write our generators other than
$\iota_{2}$ with subscripts enclosed in parentheses indicating their
dimensions.  In the cohomological case we have $|v_{2}|=2-2p^{2}$,
which is $-16$ for $p=3$.

Thus we have
\begin{align*}
H^{*}K_{2}\,\,\,
 & \!\!\!=P(\iota_{2})
\otimes E(u_{s}:s\geq 0 )
\otimes P(z_{s}:s>0 ) \\
 & \!\!\!=P(\iota_{2})
\otimes E(u_{(3)},\, u_{(7)},\,u_{(19)},\,\dotsc  )
\otimes P(z_{(8)},\,z_{(20)},\,z_{(56)},\,\dotsc )
     \qquad \mbox{for }p=3
\end{align*}

\noindent with
% Defined in ~/math/diary/diary.sty
\begin{equation}\label{eq-Q2}
\begin{split}
Q_{2} (\iota_{2})
 & = u_{2},\\
Q_{2} (u_{s})
 & =\mycases{
z_{2}  &\mbox{for }s=0\\
z_{1}^{p}
       &\mbox{for }s=1\\
0      &\mbox{for }s=2\\
z_{s-2}^{p^{2}}
       &\mbox{for }s\geq 3,
}\\
\aand
Q_{2} (z_{s})
 &=0.
\end{split}
\end{equation}%

\noindent The actions of $Q_{2}$ on the first five $u_{s}$s
imply that
\begin{align*}
Q_{2} (u_{3}-z_{1}^{p^{2}-p}u_{1})
 & =0\\
\aand
Q_{2} (u_{4}-z_{2}^{p^{2}-1}u_{0})
 & =0,
\end{align*}

\noindent so as in (\ref{eq-w}) we define
\begin{displaymath}
w_{2,0}
  := u_{2},\qquad
w_{2,1}
 := u_{3}-z_{1}^{p^{2}-p}u_{1}\qquad
\aand
w_{2,2}
   := u_{4}-z_{2}^{p^{2}-1}u_{0},
\end{displaymath}

\noindent with each being killed by $Q_{2}$.

The Adams $E^{1}$-term is
\begin{align*}
\lefteqn{P(v_{2})\otimes P(\iota_{2})\otimes E(u_{s}:s\geq 0)
      \otimes P(z_{s}:s\geq 1)}\qquad\qquad\\
\mbox{with }\qquad \qquad
v_{2} & \in G^{2}_{2-2p^{2},1} &
\iota_{2}
  & \in G^{2}_{2,0}\\
u_{s}
  & \in G^{2}_{2p^{s}+1,0}&
z_{s}
  & \in G^{2}_{2p^{s}+2,0}\\
\aand
d^{1}\iota_{2}
 & = v_{2} u_{2}  &
d^{1}u_{0}
 & = v_{2} z_{2}  \\
d^{1}u_{1}
 & = v_{2} z_{1}^{p}  &
d^{1}u_{s}
 & = v_{2} z_{s-2}^{p^{2}}\qquad \mbox{for }s\geq 3.
\end{align*}

It follows that modulo $v_{2}$-torsion, the Adams $E_{2}$-term is
% Defined in ~/math/diary/diary.sty
\begin{numequation}\label{eq-E2-mod-torsion}
\begin{split}
\lefteqn{P(v_{2})\otimes P(\iota_{2}^{p})
      \otimes E(\iota_{2}^{p-1}u_{2}, w_{2,1},w_{2,2})
      \otimes T_{p}(z_{1})\otimes T_{p^{2}}(z_{s}: s\geq 3)}\qquad\qquad\\
 & = k (2)^{*}\otimes P(y_{2,1})
      \otimes E(y_{2,1/2}, w_{2,1},w_{2,2})
      \otimes L_{2}\otimes T_{p^{2}}(z_{s}: s\geq 3).
\end{split}
\end{numequation}%

\begin{lemma}\label{lem-E2}
For any prime and for all $n$,
in the {\ASS}  for $k (n)^{*}K_{2}$,
\begin{enumerate}[label={(\roman*)},itemindent=1em]
\item \label{lem-E2ii}
every power of $\iota_{2}$ supports a differential, and
\item  \label{lem-E2i}
$z_{s}$ is a nontrivial permanent cycle for $s>0$,
\item \label{lem-E2iii}
some $v_{n}$-multiple of each $z_{s}$ for $s> n$ is killed by a
differential.
\end{enumerate}
\end{lemma}

Lemma \ref{limita} below is a similar statement.

\proof
\begin{enumerate}[label={(\roman*)},itemindent=1em]
\item There is a fiber sequence
\begin{displaymath}
K (\Z,2)\to K_{2}\to K (\Z,3)
\end{displaymath}

\noindent for which the {\SSS } collapses, that is
\begin{align*}
H^{*}K (\Z,2)
 & = P (\iota_{2})  \\
H^{*}K (\Z,3)
 & = E (u_{s}:s\geq 0)
      \otimes P (z_{s}:s\geq 1) \\
H^{*}K_{2}
 & = H^{*}K (\Z,2)\otimes H^{*}K (\Z,3) .
\end{align*}

\noindent This means that nothing in $P (\iota_{2})$ can be hit by an
Adams differential for any $n$.  Thus (\ref{eq-K2K2}) below implies that
each power of $\iota_{2}$ must support a nontrivial Adams differential.
\crbr{For $n=2$,}
the action of $Q_{2}$ in $H^{*}K (\Z,3)$ is given in (\ref{eq-Q2}).

\bigskip
\item We also have a $p$-local fiber sequence
\begin{displaymath}
K (\Z,3)\to BP\langle 1 \rangle_{2p+2}\to BP\langle 1 \rangle_{4}
\end{displaymath}

\noindent in which the second and third spaces have even dimensional
cohomology.  The generators $z_{i}\in H^{*}K (\Z,3)$ are in the image
of the map from\linebreak $H^{*}BP\langle 1 \rangle_{2p+2}$, so they
map to permanent cycles in the {\ASS}s for both $k (n)^{*}K (\Z,3)$
and $k (n)^{*}K_{2}$.

\bigskip
\item We know by \cite[dual to Theorem 11.1]{RW:CF} that
% Defined in ~/math/diary/diary.sty
\begin{equation}\label{eq-K2K2}
\begin{split}
K (n)^{*}K_{2} = K (n)^{*}\otimes \bigotimes_{0<s<n} T_{p^{n-s}}(z_{s}).
\end{split}
\end{equation}%
\qedhere

\noindent This means that our Adams $E_{\infty }$-term must be
congruent  to
\begin{displaymath}
k (n)^{*}\otimes \bigotimes_{0<s<n} T_{p^{n-s}}(z_{s})
\end{displaymath}

\noindent modulo $v_{n}$-torsion.

\end{enumerate}

\bigskip {\em It turns out that there is only one pattern of higher
Adams differentials that leads to an answer meeting the conditions
imposed by Lemma \ref{lem-E2}.}

\bigskip
For $p=3$, it begins as follows.
\begin{equation}\label{eq-diffs}
\begin{split}
\begin{array}{rllrll}
[d^{1} (\iota_{2})
 & =v_{2} w_{(19)}& \in G^{1}_{3,1}]  &
d^{2} (\iota_{2}^{2}w_{(19)})
 & =v_{2}^{2}z_{(56)}& \in G^{2}_{24,2}   \\
d^{3} (\iota_{2}^{3})
 & =v_{2}^{3} w_{(55)}& \in G^{3}_{7,3}  &
d^{6} (\iota_{2}^{6}w_{(55)})
 & =v_{2}^{6}z_{(164)}& \in G^{6}_{68,6}   \\
d^{9} (\iota_{2}^{9})
 & =v_{2}^{9} w_{(163)}& \in G^{9}_{19,9}  &
\qquad d^{18} (\iota_{2}^{18}w_{(163)})
 & =v_{2}^{18}z_{(488)}& \in G^{18}_{200,18}
\end{array}
\end{split}
\end{equation}%

\begin{remark}\label{rem-2}
{\bf Standard notational abuse.}  In (\ref{eq-diffs}) we are abusing
notation for higher differentials in the usual way.  For example, the
source of the $d^{2}$, is written as $\iota_{2}^{2}u_{(19)}$. However it
is not really a product in $G^{2}$ because $\iota_{2}$ is no longer
present there since it supported a $d^{1}$. Strictly speaking,
$\iota_{2}^{2}u_{(19)}$ is an abbreviation for the Massey product
\begin{displaymath}
\langle v_{2}^{2} ,\,u_{(19)} ,\,u_{(19)}  ,\,u_{(19)} \rangle
\in G^{2}_{23,0}.
\end{displaymath}%

\noindent Similarly $\iota_{2}^{3}$ is code for
\begin{displaymath}
\langle u_{(19)},\,v_{2} ,\,v_{2} u_{(19)},\,v_{2},\,u_{(19)} \rangle
\in G^{2}_{6,0}=G^{3}_{6,0}.
\end{displaymath}

An introduction to Massey products can be found in \cite[A1.4]{Rav:MU}
(and in \cite[A1.4]{Rav:MU2}), which is an introduction to Peter May's
definitive paper on the subject \cite{May:Matric}.
\end{remark}

Let
\begin{displaymath}
y_{2,s}:=\iota_{2}^{p^{s}}\qquad \mbox{for }s\geq  0
\qquad \aand
y_{2,s+1/2}:=y_{2,s}^{p-1}w_{2,s}\qquad \mbox{for }s\geq 0 .
\end{displaymath}

% \noindent
% % Defined in ~/math/diary/diary.sty
% \begin{equation}\label{eq-y}
% \begin{split}
% y_{2,i}:=\iota_{2}^{p^{i}}\qquad \mbox{for }i\geq  0
% \qquad \aand
% y_{2,i+1/2}:=y_{2,i}^{p-1}w_{2,i}\qquad \mbox{for }i\geq 0 .
% \end{split}
% \end{equation}%

\noindent Then for a general prime $p$, (\ref{eq-diffs}) reads
\begin{equation}\label{eq-diffsp}
\begin{split}
\begin{array}{rlrl}
d^{1} (y_{2,0})
 & =v_{2} w_{2,0}
   &d^{p-1} (y_{2,1/2})
     & =v_{2}^{p-1}z_{3}\\
d^{p} (y_{2,1})
 & =v_{2}^{p} w_{2,1}
   &d^{p^{2}-p} (y_{2,3/2})
     & =v_{2}^{p^{2}-p}z_{4}\\
d^{p^{2}} (y_{2,2})
 & =v_{2}^{p^{2}} w_{2,2}\qquad
   &d^{p^{3}-p^{2}} (y_{2,5/2})
     & =v_{2}^{p^{3}-p^{2}}z_{5}
\end{array}
\end{split}
\end{equation}%

The first differential reflects the fact that
\begin{displaymath}
Q_{2}y_{2,0} = Q_{2}\iota_{2} = u_{2}=w_{2,0} \in H^{*}K_{2}.
\end{displaymath}

\noindent We also have
\begin{displaymath}
Q_{2}u_{s}=\mycases{
z_{2-s}^{p^{s}}
       &\mbox{for }0\leq s\leq 1\\
0      &\mbox{for }s=2\\
z_{s-2}^{p^{2}}
       &\mbox{for }s\geq 3.
}
\end{displaymath}

\noindent These lead to
\begin{displaymath}
E_{2}\cong T_{p} (z_{1})\otimes P (v_{2}, y_{2,1})
     \otimes  E (y_{2,1/2},w_{2,1},w_{2,2})\otimes T_{p^{2}} (z_{2+s}:s>0)
\end{displaymath}

\noindent modulo $v_{2}$-torsion as in (\ref{eq-E2-mod-torsion}), where
\begin{displaymath}
y_{2,1/2}=y_{2,0}^{p-1}w_{2,0}.
\end{displaymath}

\noindent  For $p=3$, this reads
\begin{displaymath}
E_{2}\cong T_{3} (z_{(8)})\otimes P (v_{2}, y_{(6)})
     \otimes  E (y_{(23)}, w_{(55)},w_{(163)})
       \otimes T_{9} (z_{(56)}, z_{(164)}, z_{(488)},\dotsc ).
\end{displaymath}

\noindent Lemma \ref{lem-E2}\ref{lem-E2iii} requires a differential
hitting a $v_{2}$-multiple of $z_{(56)}$.  It cannot be supported by a
$v_{2}$-torsion element since its target is torsion free.  The only
classes in low enough dimensions live in $ T_{p} (z_{1})\otimes P
(y_{2,1}) \otimes E (y_{2,1/2})$.  Since $z_{1}$ is a permanent
cycle, we can restrict our attention to $P (y_{2,1}) \otimes E
(y_{2,1/2})$, which is $P (y_{(6)})\otimes E (y_{(23)})$ for
$p=3$.  The only class in a dimension congruent to $|z_{3}|-1$ mod
$|v_{2}|$ (55 mod 16 for $p=3$) is $y_{2,1/2}$, which is $y_{(23)}$
for $p=3$.

{\em This gives us the second differential listed in (\ref{eq-diffsp}).}
It also gives
\begin{displaymath}
E_{p}\cong T_{p} (z_{1})\otimes P (v_{2},y_{2,1})
     \otimes  E (w_{2,1},w_{2,2}, w_{2,3})\otimes T_{p^{2}} (z_{2+s}:s>1)
\end{displaymath}

\noindent modulo $v_{2}$-torsion, which for $p=3$ reads
\begin{displaymath}
E_{3}\cong T_{3} (z_{(8)})\otimes P (v_{2}, y_{(6)})
     \otimes  E ( w_{(55)},w_{(163)}, w_{(471)})
       \otimes T_{9} (z_{(164)}, z_{(488)},z_{(1460)},\dotsc ).
\end{displaymath}

What happens next? Something has to kill $z_{(164)}$, but none of the
listed lower dimensional generators are in the right dimension to do
so.  However if\linebreak  $d^{3}y_{(6)}=v_{2}^{3}w_{(55)}$, we would get a new
generator $y_{2,3/2}=y_{2,1}^{p-1}w_{2,1}$, which is\linebreak
$y_{(67)}=y_{(6)}^{2}w_{(55)}$ at $p=3$.  It is in the right dimension
to kill $v_{2}^{6}z_{(164)}$.  Thus we get the next two differentials
listed in (\ref{eq-diffsp}) and we have (modulo $v_{2}$-torsion)
\begin{align*}
E_{p+1}=E_{p^{2}-p}
 & \cong  T_{p} (z_{1})\otimes P (v_{2},y_{2,2})
     \otimes  E (y_{2,3/2},w_{2,2}, w_{2,3})
    \otimes T_{p^{2}} (z_{2+i}:i>1)  \\
\aand
E_{p^{2}-p+1}
 & \cong  T_{p} (z_{1})\otimes P (v_{2},y_{2,2})
     \otimes  E (w_{2,2}, w_{2,3}, w_{2,4})
    \otimes T_{p^{2}} (z_{2+i}:i>2)\\
 &\qquad \mbox{with }w_{2,4}=y_{2,3/2}z_{4}^{p^{2}-1}  .
\end{align*}

\noindent For $p=3$, this reads
\begin{align*}
E_{3}=E_{6}
 & \cong  T_{3} (z_{(8)})\otimes P (v_{2},y_{(18)})
     \otimes  E (y_{(67)},w_{(163)}, w_{(471)})
    \otimes T_{9} (z_{(164)}, z_{(488)}, \dotsc )  \\
\aand
E_{7}
 & \cong  T_{3} (z_{(8)})\otimes P (v_{2},y_{(18)})
     \otimes  E (w_{(163)}, w_{(471)}, w_{(1379)})
    \otimes T_{9} (z_{(488)}, z_{(1460)}, \dotsc )   .
\end{align*}

Note that at each stage we have the following factors:

\begin{itemize}
\item [$\bullet$] $k (2)^{*}\otimes T_{p} (z_{1})$,
\item [$\bullet$] the polynomial algebra generated by some $p^{k}$th
power of $\iota_{2}$,
\item [$\bullet$] an exterior algebra on three generators (two $w$s
plus a $y$ or a third $w$) with each having a dimension congruent to 3
or $2p+1$ mod $|v_{2}|$, and
\item [$\bullet$] a truncated polynomial algebra of height $p^{2}$ on
infinitely many $z_{i}$s having dimensions alternately congruent to 4
and $2p+2$ mod $|v_{2}|$.
\end{itemize}

The exterior generator with the $y$ label is the only one in a
position to kill the next $z$.  {\em These phenomena persist
throughout the {\SS} and generalize to larger values of $n$.}  The
factor $T_{p} (z_{1})$ generalizes to $L_{n}$ as in
(\ref{eq-convenient}).  The dimension of each $y_{n,i}$ is congruent
to 2 modulo $2p-2$, while those of the exterior generators and the
$z_{i}$s are congruent to 3 and 4 respectively.  Half the generators
remove $y_{n,i}$ and $w_{n,i}$, replacing them with $y_{n,i+1}$ and
$y_{n,i+1/2}$. The others remove $z_{n+i+1}$ and replace $y_{n,i+1/2}$
by $w_{n,i+n+1}$.

We want to extend (\ref{eq-diffsp}) further with differentials
supported by higher powers of $\iota_{2}$ in the left column and ones
killing $v_{2}$-multiples of higher $z_{i}$s in the right column.

Since $v_{2}z_{3}^{p^{2}}=0$ in $G^{2}$, the element
\begin{displaymath}
w_{2,3} := y_{2,0}^{p-1}w_{2,0}z_{3}^{p^{2}-1}
=\langle v_{2}^{p-2},\,v_{2}z_{3} ,\,z_{3}^{p^{2}-1} \rangle
\qquad \mbox{as in (\ref{eq-w})}
\end{displaymath}

\noindent is a $d^{p-1}$-cycle and hence a target for $y_{2,3}$.
Thus we have
\begin{align*}
d^{p^{3}-p+2}y_{2,3}
 & = v_{2}^{p^{3}-p+2}w_{2,3} &
\aand
d^{p^{4}-p^{3}+\crr{p-2}} (y_{2,3}^{p-1}w_{2,3})
 & = v_{2}^{p^{4}-p^{3}+p-2}z_{6}
\end{align*}

\noindent with
\begin{displaymath}
y_{2,7/2}:=y_{2,3}^{p-1}w_{2,3}
 =\langle v_{2}^{(p-1) (p^{3}-1)},\,
     \underbrace{w_{2,3},\,\dotsc ,\,w_{2,3}}_{\text{$p$ factors} } \rangle.
\end{displaymath}

We denote the indices of the
differentials on $y_{2,i}$ and $y_{2,i+1/2}$ by $\rho_{2} (i)$ and\linebreak
$\rho_{2} (i+1/2)$.  Hence the $i$th row of (\ref{eq-diffsp}) is
\begin{displaymath}
d^{\rho_{2} (i)}y_{2,i}
 = v_{2}^{\rho_{2} (i)}w_{2,i}
   \qquad \aand  d^{\rho_{2}(i+1/2)}y_{2,i+1/2}
      = v_{2}^{\rho_{2}(i+1/2)}z_{i+n+1},
\end{displaymath}

% \noindent
% % Defined in ~/math/diary/diary.sty
% \begin{equation}\label{eq-ith-row}
% \begin{split}
% \begin{array}{rlrl}
% d^{\rho_{2} (i)}y_{2,i}
%  & = v_{2}^{\rho_{2} (i)}w_{2,i}
%    &\aand  d^{\rho_{2}(i+1/2)}y_{2,i+1/2}
%       & = v_{2}^{\rho_{2}(i+1/2)}z_{i+3},
% \end{array}
% \end{split}
% \end{equation}%

\noindent where
\begin{displaymath}
w_{2,i}=\mycases{
u_{2}  &\mbox{for }i=0\\
u_{3}-z_{1}^{p^{2}-p}u_{1}
       &\mbox{for }i=1\\
u_{4}-z_{2}^{p^{2}-1}u_{0}
       &\mbox{for }i=2\\
y_{2,i-3}^{p-1}w_{i-1}z_{i}^{p^{2}-1}
 =y_{2,i-5/2}z_{i}^{p^{2}-1}
       &\mbox{for }i\geq 3
}
\end{displaymath}

\noindent as in (\ref{eq-w}).

The following is a special case of Lemma \ref{lem-ab} below.

\begin{prop}\label{prop-n2rho}
The indices $\rho_{2} (i)$ and $\rho_{2} (i+1/2)$ for integers $i\geq 0$ are
\begin{align*}
\rho_{2} (i)
 & =  \mycases{
p^{i}
       &\mbox{for }0\leq i\leq 2\\
 p^{i}-p^{i-2}+1+\rho_{2} (i-3)
       &\mbox{for }i\geq 3}  \\
\aand \rho_{2} (i+1/2)
 & = p^{i+1}-\rho_{2} (i).
\end{align*}
\end{prop}

\section{Numbers and definitions}
\label{numdef}

% Although we are only interested in odd primes until the last  section,
% all of the formulas and numbers in the section work for $p=2$ except for
% one small deviation.

In this section we give some definitions and compute some numbers we
need.  We already have elements $y_{n,i}$, $z_{i}$ and $w_{n,i}$ and
$y_{n,i+1/2}$ with
% Defined in ~/math/diary/diary.sty
\begin{equation}\label{induct}
\begin{split}
|y_{n,i}| & = 2p^{i},\\
|z_{i}| & = 2 (p^{i}+1),\\
|y_{n,i+1/2}|
   &= |y_{n,i}^{p-1}w_{n,i}|= 2p^{i}(p-1)+| w_{n,i}|, \\
\aand
|w_{n,i+n+1}|
 & = |y_{n,i+1/2}z_{n+i+1}^{p^{n}-1}|
   = |y_{n,i}^{p-1}w_{n,i}z_{n+i+1}^{p^{n}-1}|\\
 & = 2p^{i}(p-1) + |w_{n,i}|+ 2(p^{n}-1) (p^{n+i+1}+1)\\
 & = 2p^{i} (p^{2n+1}-p^{n+1}+p-1)+2 (p^{n}-1) + |w_{n,i}|\\
 & = 2p^{i} (p^{n+1}c_{n}+c_{1})+2c_{n} + |w_{n,i}|,\\
 &\qquad \mbox{where }c_{k}:=p^{k}-1.
\end{split}
\end{equation}%

Regarding these as functions of $i$, we will see that each one
satisfies a recursive formula similar to that for $|w_{n,i}|$.  To
study such functions, we need some notation.

%\clearpage
\begin{defin}\label{def-floor}\phantom{Hello}
\begin{enumerate}[label={(\roman*)},itemindent=1em]
% \item \label{def-floori}For a rational number $x$, $\lfloor x
% \rfloor$, the {\em floor of $x$}, denotes the greatest integer not
% exceeding $x$.
% \item \label{def-floorii} For a fixed
% positive integer $n$, any integer $i\geq 0$ can be written uniquely as
% \begin{displaymath}
% i=i_{0}+ (n+1)i_{1}\qquad \mbox{with }0\leq i_{0}\leq n,
% \end{displaymath}

% \noindent making $i_{1}=\lfloor i/ (n+1) \rfloor$.

% \item \label{def-flooriii}Similarly any integer $i\geq 0$ can be written uniquely as
% \begin{displaymath}
% i=i'_{0}+ n i'_{1}\qquad \mbox{with }0\leq i'_{0}< n,
% \end{displaymath}

% \noindent making $i'_{1}=\lfloor i/n \rfloor$.

\item \label{def-floorii} For a fixed positive integer $n$, let
$i_{1}=\lfloor i/ (n+1) \rfloor$ and $i_{0}=i- (n+1)i_{1}$ \crb{(the
reduction of $i$ modulo $n+1$)} for any integer $i$.

\item \label{def-flooriii}Similarly let $i'_{1}=\lfloor i/n  \rfloor$ and $i'_{0}=i- ni'_{1}$.

\item \label{def-flooriv}   Let
\begin{align*}
g_{n} (i)
 &: = \frac{p^{i}-p^{i_{0}}}{p^{n+1}-1}
 = \mycases{
0    &\mbox{for }0\leq i\leq n\\
p^{i-n-1}+g_{n} (i-n-1)
     &\mbox{for }i\geq n+1
   }\\
 &\phantom{:} = p^{i- (n+1)}+p^{i-2 (n+1)}+p^{i-3 (n+1)}+\dotsb +p^{i_{0}}.
\end{align*}
\end{enumerate}

It follows that $i$ can be written uniquely as
\begin{displaymath}
i=\mycases{
i_{0}+ (n+1)i_{1}&\mbox{with }0\leq i_{0}\leq n\\
%\aand
i'_{0}+ ni'_{1}&\mbox{with }0\leq i'_{0}< n
}
\end{displaymath}

% \noindent
% \begin{equation}\label{eq-q}
% \begin{split}
% g_{n} (i)
% % &:=p^{j}q_{n} (k)= \frac{p^{i}-p^{j}}{p^{n+1}-1}\\
%  &: = \frac{p^{i}-p^{i_{0}}}{p^{n+1}-1} \\
%  & = \mycases{
% 0    &\mbox{for }0\leq i\leq n\\
% p^{i-n-1}+g_{n} (i-n-1)
%      &\mbox{for }i\geq n+1
%    }
% \end{split}
% \end{equation}%
\end{defin}

%The proof of the following is elementary and  left to the reader.

\begin{lemma}\label{lem-f}
For a fixed positive integer $n$, suppose we have an integer valued
function $f_{n} (i)$ defined for integers $i\geq 0$ and satisfying the
recursive equation
\begin{displaymath}
f_{n} (i+n+1) = ap^{i}+f_{n} (i)+b \qquad
\mbox{for  constants $a$ and $b$.}
\end{displaymath}

\noindent Then, with notation as in Definition \ref{def-floor},
\begin{align*}
f_{n} (i)
 & = ag_{n} (i)+ b\lfloor i/ (n+1) \rfloor +f_{n} (i_{0})
\crr{\,\,=\, ag_n(i) + f_n(i_0)+bi_1}
\\
 & \equiv a\left(\frac{p^{i'_{0}}-p^{i_{0}}}{p-1} \right)
  +f_{n} (i_{0})+ b\lfloor i/ (n+1) \rfloor\qquad \bmod (p^{n}-1),
\end{align*}

\noindent and the latter expression is an integer.
\end{lemma}

\proof
Iterating the recursion relation gives
\begin{align*}
f_{n} (i)
 & = ap^{i- (n+1)}+f_{n} (i- (n+1))+b  \\
 & = a\left(p^{i- (n+1)}+p^{i- 2(n+1)} \right) +f_{n} (i- 2(n+1))+2b  \\
 & \hspace{2mm}\vdots    \\
 & = a \left(p^{i- (n+1)}+p^{i- 2(n+1)}+\dotsb +p^{i_{0}} \right)
             + f_{n} (i_{0})+ i_{1}b  \\
 & = ag_{n} (i) +f_{n} (i_{0})+\lfloor i/ (n+1) \rfloor.
\end{align*}

\noindent The congruence modulo $(p^{n}-1)$ follows from the fact that
$p^{n}\equiv 1$.  \qed

\begin{lemma}\label{lem-ab}
The values of $a$, $b$, $f_{n} (i_{0})$ and $f_{n} (i)\bmod 2
(p^{n}-1)$ for some functions of interest are shown in the following
table, where again $c_{k}:=p^{k}-1$.
%$\alpha :=p-1$ and $\beta :=p^{n}-1$.
\begin{center}
\begin{tabular}[]{|c||c|c|c|c|}
\hline
$f_{n} (i)$
  &$a$
    &$b$
      &$f_{n} (i_{0})$
        &$f_{n} (i)\bmod 2 c_{n}$\\
%        &$f_{n} (i)\bmod 2 (p^{n}-1)$\\
\hline
$g_{n} (i)$
  &$1$
    &$0$
      &$0$
        &\\
$\lfloor i/(n+1) \rfloor$
  &$0$
    &$1$
      &$0$
        &\\
$|y_{n,i}|=2p^{i}$
  &$2 c_{n+1}$
  %$2 (p^{n+1}-1)$
    &$0$
      &$2p^{i_{0}}$
        &$2p^{i'_{0}}$\\
$|u_{i}|=2p^{i}+1$
  &$2 c_{n+1}$
  %$2 (p^{n+1}-1)$
    &$0$
      &$2p^{i_{0}}+1$
        &$2p^{i'_{0}}+1$\\
$|z_{i}|=2p^{i}+2$
  &$2 c_{n+1}$
  %$2 (p^{n+1}-1)$
    &$0$
      &$2p^{i_{0}}+2$
        &$2p^{i'_{0}}+2$\\
\hline
$\rho_{n} (i)$
  &$pc_{n} $
    &$1$
      &$p^{i_{0}}$
        &\\
$\rho_{n} (i+1/2)$
  &$p^{n+1}c_{1} $
    &$-1$
      &$p^{i_{0}}c_{1} $
        &      \\
$|w_{n,i}|$
  &$2 (c_{1} +p^{n+1}c_{n} )$
    &$2 c_{n} $
      &$2p^{n+i_{0}}+1$
        &$2p^{i'_{0}}+1$\\
$|y_{n,i+1/2}|$
  &$2p^{n+1} (c_{1}+c_{n}) $
    &$2c_{n} $
      &$2p^{i_{0}} (c_{n}+p) +1$
        &$2p^{i'_{0}+1}+1$\\
\hline
\end{tabular}
\end{center}

In particular there are relations
% Defined in ~/math/diary/diary.sty
\crr{
\begin{numequation}\label{eq-wu}
\begin{split}
|w_{n,i}|\;\crr{\leq} \;|u_{n+i}|
 & =2p^{n+i}+1\\
\rho_n(i+1/2) +  \rho_n(i)
 &  = p^{i+1}, \\
 \aaand  \rho_n(i)
 &  \le p^i < \rho_n(i+1).
\end{split}
\end{numequation}%
}

\crb{More explicitly for integers $i\geq 0$,
% Defined in ~/math/diary/diary.sty
\begin{numequation}\label{eq-rho-n}
\begin{split}
\rho_{n} (i)
& = (p^{n+1}-p) (p^{i- (n+1)}+p^{i-2 (n+1)}+\dotsb +p^{i_{0}})\\
&\qquad
        + p^{i_{0}}+i_{1}\\
& =  p^{i}-p^{i-n}+p^{i-n-1}-p^{i-2n-1}+\dotsb\\
& \qquad  +p^{i_{0}+n+1}-p^{1+i_{0}}   + p^{i_{0}}     +i_{1}\\
& = \mycases{
p^{i}
       &\mbox{for }0\leq i\leq n\\
p^{i}-p^{i-n}+p^{i-n-1}+1
       &\mbox{for }n+1\leq i\leq 2n+1\\
p^{i}-p^{i-n}+p^{i-n-1}-p^{i-2n-1}+p^{i-2n-2}+2\hspace{-3cm}\\
       &\mbox{for }2n+2\leq i\leq 3n+2\\
\vdots
}\\
%\aand \rho_{n} (i+1/2)
%& = p^{i+1}-\rho_{n} (i).
\end{split}
\end{numequation}%
}
\end{lemma}

We do not need the values of $f_{n} (i)\bmod 2 (p^{n}-1)$ in the cases
where it is not shown.

% \begin{cor}\label{cor-}
% %{\bf .}
% $\rho_{n} (i)+\rho_{n} (i+1/2)=p^{i+1}$.
% \end{cor}

\proof[Proof of Lemma \ref{lem-ab}] The first five functions are
defined explicitly, so filling in the columns for them is
straightforward.  We also know the values $f_{n} (i_{0})$ for the last
four functions listed, so it remains to determine the constants $a$
and $b$ for each of them.  The congruences modulo $2 (p^{n}-1)$ are
also straightforward.

The constants $a$ and $b$ for $\rho_{2} (i)$ were given in Proposition
\ref{prop-n2rho}.

Our differentials
\begin{displaymath}
d^{\rho_{n}(i)} (  y_{n,i})  = v_{n}^{\rho_{n}(i)}w_{n,i}
\qquad\aand
d^{\rho_{n}(i+1/2)} ( y_{n,i+1/2}) = v_{n}^{\rho_{n}(i+1/2)}z_{n+i+1}.
\end{displaymath}

% \noindent
% \begin{equation}
% \label{twodiff}
% d^{\rho_{n}(i)} (  y_{n,i})  = v_{n}^{\rho_{n}(i)}w_{n,i}
% \qquad\aand
% d^{\rho_{n}(i+1/2)} ( y_{n,i+1/2}) = v_{n}^{\rho_{n}(i+1/2)}z_{n+i+1}.
% \end{equation}

\noindent imply
\begin{align}
\begin{split}
\label{diff}
 | y_{n,i} | + 1 + 2 (p^{n}-1)\rho_{n}(i)
 & = |w_{n,i}|\\
\aand | y_{n,i+1/2}| + 1 +  2 (p^{n}-1)\rho_{n}(i+1/2)
 & = |z_{i+n+1}|.
\end{split}
\end{align}

\noindent This means that the constants for $\rho_{n} (i)$ and
$\rho_{n} (i+1/2)$ are determined by those of $|y_{n,i}|$ and
$|z_{i}|$, which are known, and those of $|w_{n,i}|$ and
$|y_{n,i+1/2}|$, to which we now turn.

The constants $a$ and $b$ for $|w_{n,i}|$ are given by (\ref{induct}).

For $|y_{n,i+1/2}|$, (\ref{zhalf}) implies
\begin{align*}
y_{n,i+1/2+n+1}
 & = y_{n,i+n+1}^{p-1} w_{n,i+n+1}
   = y_{n,i+n+1}^{p-1}w_{n,i}y_{n,i}^{p-1}z_{i+n+1}^{p^{n}-1}\\
 & = y_{n,i+n+1}^{p-1}y_{n,i+1/2}z_{i+n+1}^{p^{n}-1}  ,   \\
\mbox{so}\qquad
|y_{n,i+1/2+n+1} |
 & =  |y_{n,i+1/2}|+|y_{n,i+n+1}^{p-1}z_{i+n+1}^{p^{n}-1}| \\
 & =  |y_{n,i+1/2}|+2 (p-1)p^{i+n+1}+2 (p^{n}-1) (p^{i+n+1}+1),
\end{align*}

\noindent which gives the stated values of $a$ and $b$.
\qed\bigskip

\section{Preliminaries before the proof}
\label{prelim}

Before proving Theorems \ref{uct} and \ref{pairing}, we have the
following observation.

\begin{prop}{\bf Divisibility Criterion.} \label{limit} If in the
 {\ASS} for $k (n)^{*}X$, $d^{r}(\alpha ) =
v_{n}^r \beta$, then
\begin{displaymath}
| \alpha | + 1 + 2r(p^{n}-1) = | \beta |,
\end{displaymath}

\noindent i.e., $|\beta |$ is congruent to $1+|\alpha |$ modulo
$|v_{n}|$.
\end{prop}

\begin{proof}[Proof of Theorem \ref{uct}] In \cite[Corollary 11.8]
{Laz-kn} and \cite[Theorem 2.3]{Rob-kn}, the odd primary $k(n)$ is
shown to be $A_\infty$.  In private communication, Lazarev says that
his argument for $k(n)$ works just as well for $p=2$.

In \cite[p. 257]{Rob-UCT}, Robinson produces a Universal Coefficient
Theorem for $A_\infty$ spectra.  In our case this gives the spectral
sequence of Theorem \ref{uct}.  For spaces of finite type with
$K(n)_{*}(X)$ finitely generated, $k(n)_{*}(X)$ is the sum of a free
module (of finite dimension) over $k(n)_{*}$ and a sum of torsion
modules, $T_k(v_{n})$.  The above $\Ext$ is easy to compute and
everything is in $\Ext^0$ and $\Ext^1$.  More precisely:
$$
\Ext_{k(n)_{*}}^{0,*}(k(n)_{*},k(n)_{*}) = k(n)^{*}
$$
$$
\Ext_{k(n)_{*}}^{1,*}(T_k(v_{n}),k(n)_{*}) = T_k(v_{n})
$$
$$
\text{with generator in} \  \Ext^{1,|v_{n}^k|}_{k(n)_{*}}
\  \text{and}\  v_{n}  \in \Ext_{k(n)_{*}}^{0,-2(p^{n}-1)}
$$
The entire $E_{2}$ term is in $\Ext^0$ and $\Ext^1$.  This is peculiar
to $k(n)$.  As a result, the spectral sequence collapses.
\end{proof}

\begin{proof}[Proof of Theorem \ref{pairing}] If we have
$d^\crr{r}(\alpha ) = v_{n}^r \beta $ in the {\ASS} for $k(n)^{*}(X)$,
it means we have (a cohomology) $T_r(v_{n})$ with generator in the
degree of $\beta $.  From the UCT, to get this, we must have a
(homology) $T_r(v_{n})$ with generator in the degree of $\alpha $.  To
get this in the {\ASS} for $k(n)_{*}(X)$, we must have a differential
$d_{\rho_{n}}(\beta ') = v_{n}^r \alpha '$ with the mentioned degrees.
Reverse the argument to get the other direction.
\end{proof}

\begin{remark}\label{rem-invert}
%{\bf .}
There is a way to invert $v_{n}$ in the {\ASS} which converts it to a full
plane {\SS }, the {\em localized} {\ASS}, rather than an upper half
plane one. Details can be found in \cite[\S2.3]{MRS4}.
\end{remark}

\begin{remark}\label{rem-synthetic}
It seems likely that Theorem \ref{pairing} also follows from the method of
synthetic spectra of Piotr Pstr\polhk{a}gowski \cite{Pst-synthetic},
but we prove it with more prosaic methods. We leave the synthetic
approach to the interested reader.
\end{remark}

Before we state the next result, we need
\begin{equation}
\label{hom}
H_{*}K_{2} =
\Gamma(\iota_{2}^{*}) \otimes
\Gamma(z_i^{*}:i>0 ) \otimes  E(u_i^{*}:i\geq 0 ).
\end{equation}

\noindent Here we have $y_{n,j}^{*} = \gamma_{p^{j}}(\iota_{2}^{*})$
dual to $y_{n,j}$ in cohomology.

In Theorem \ref{pd-dual}\ref{pd-dual0}, we compute the $E_{2}$ term
for the {\ASS} for $k(n)_{*}(K_{2})$.  In particular,
$\Gamma(y_{n,1}^{*})$ is there.

%The following is similar to Lemma \ref{lem-E2}.
\crg{
The following is a refined version of Lemma \ref{lem-E2}.  Unfortunately,
the proof of the crucial refinement is intertwined with the proof
of the computation of the ASS, Theorem \ref{pd}, in the next section.
This result seems to best fit this section and it is easy enough
to read off what is needed from Theorem \ref{pd}.
}

\begin{lemma}\label{limita}
\crg{For any prime}, the $z_i$ are all permanent cycles in the {\ASS} for
$k(n)^{*}(K_{2})$ and there is a non-zero differential $d^r (y_{n,i})$ for
some $r \le p^{i}$.  In the {\ASS} for $k(n)_{*}(K_{2})$, $v_{n}^r y_{n,i}^{*}$ is
hit by a differential for some $r \le p^{i}$.
\end{lemma}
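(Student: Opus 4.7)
The plan is to exploit naturality with respect to the localization map $k(n) \to K(n)$ together with the Ravenel-Wilson computation of $K(n)^*(K_2)$. Inverting $v$ makes negative filtrations available in the $K(n)$ spectral sequence, allowing classes absent from $K(n)^*(K_2)$ to be killed as boundaries from below. By contrast, the $k(n)$ spectral sequence has nonnegative filtration, so classes in filtration zero can die only by supporting differentials.

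For the claim that each $z_j$ is a permanent cycle, consider a putative nonzero differential $d^r(z_j) = v^r m$ in the $k(n)$ spectral sequence. By Theorem \ref{E2k}, $m$ must lie in the $P[v]$-free summand of $E_2$, since the filtration-zero $v$-annihilated summand is killed by $v^r$ for $r \ge 1$. Applying the spectral sequence map to $K(n)$, this would produce a nonzero differential $d^r(z_j) = v^r m$ in the $K(n)$ spectral sequence. Ravenel-Wilson's description of $K(n)^*(K_2)$ forces each $z_j$ to be a permanent cycle there: for $0 < j < n$ it represents an $E_\infty$-generator, and for $j \ge n$ it is a boundary hit from a negative-filtration class (boundaries are automatically cycles). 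This contradicts the putative differential, so $d^r(z_j) = 0$ for all $r \ge 1$.

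For the nonzero differential on $y_j$, observe that $y_j$ is absent from $K(n)^*(K_2)$ and lies in filtration zero, so it cannot be hit by any differential in the $k(n)$ spectral sequence and must support a nonzero differential $d^r(y_j) = v^r m$ for some $r \ge 1$. By Lemma \ref{limit} the target satisfies $|m| = 2p^j + 1 + 2r(p^n - 1)$, which is odd, forcing $m$ to involve at least one of the odd-degree free generators $w_{n+i}$ or $w_{n+i+1/2}$ listed in Theorem \ref{E2k}. A direct inspection of generator degrees, combined with the arithmetic of Lemma \ref{randr}, shows that the smallest $r$ for which such a candidate $m$ exists is at most $r(j)$, and Lemma \ref{randr} records $r(j) \le p^j$.

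The homology assertion is then immediate from Theorem \ref{pairing}: the cohomology differential $d^r(y_j) = v^r m$ corresponds to a homology differential $d_r(m') = v^r y_j^*$ at the same page $r \le p^j$, exhibiting $v^r y_j^*$ as a boundary in the $k(n)_*(K_2)$ spectral sequence. The main obstacle in this plan is justifying the upper bound $r \le p^j$ without invoking the specific differentials proved later in Section \ref{thess}; this reduces to a finite combinatorial check that, for each $r' < r(j)$, no free-part monomial of odd total degree $2p^j + 1 + 2r'(p^n - 1)$ sits in the required bidegree.
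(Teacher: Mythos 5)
There is a genuine gap, and in fact several. Your whole plan rests on comparing the $k(n)$-based ASS with a $v$-inverted, $K(n)$-based spectral sequence, but the key transfer step fails: from a hypothetical nonzero $d^r(z_j)=v^rm$ in the $k(n)$ ASS you cannot conclude a nonzero differential after inverting $v$, because on the $E_r$-page the target $v^rm$ may already be $v$-torsion (created by earlier differentials) and hence die under localization; so no contradiction with the $K(n)$-picture is obtained. You would also need to justify that a localized ASS for the infinite complex $K_2$ exists, converges to $K(n)^*(K_2)$, and receives a map of spectral sequences, none of which is set up in the paper. Moreover, for $j>n$ the assertion that $z_j$ is a permanent cycle in the periodic spectral sequence does not follow from the mere form of the answer $K(n)^*(K_2)$: the answer only tells you $z_j$ dies after inverting $v$, not that it fails to support a differential before being hit. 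The same problem undercuts your argument for $y_j$: absence of $y_j$ from $K(n)^*(K_2)$ is compatible with $y_j$ being a permanent cycle whose $v$-tower is truncated from above (that would produce $v$-torsion in $k(n)^*(K_2)$, invisible after inverting $v$), so it does not force $y_j$ to support a differential. Finally, the bound $r\le p^j$ is the real content of the lemma and your proposal does not prove it; a ``candidate target exists in the right degree by Lemma \ref{randr}'' argument only shows a differential of that length is numerically possible, not that a longer one (or none at all) is excluded, as you yourself acknowledge.

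The paper's proof uses two honest, non-spectral-sequence inputs that your proposal is missing. For the $z_j$: by Tamanoi (and Ravenel--Wilson--Yagita), the $z_j$ lie in the image of $BP^*(K_2)\to H^*K_2$, and this map factors through $k(n)^*(K_2)$; a filtration-zero class that lifts to the abutment cannot support a differential, so the $z_j$ are permanent cycles with no comparison of spectral sequences needed. For $y_j$: the composite $\cp\xrightarrow{p}\cp\to K_2$ and the $[p]$-series $[p](s)=vs^{p^n}$ for $k(n)$ give the relation $v^{p^j}y_j^*=0$ in the actual group $k(n)_*(K_2)$, which forces some $d_r$ to hit $v^ry_j^*$ with $r\le p^j$ in the homology ASS; only then does Theorem \ref{pairing} transfer this to a differential $d^r(y_j)\ne 0$, $r\le p^j$, in cohomology (your proposal runs the Pairing in the opposite direction, starting from a cohomology statement that has not been established). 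If you want to salvage your approach, you would need to replace the localization comparison by these (or equivalent) geometric inputs; the degree bookkeeping of Lemmas \ref{limit} and \ref{randr} alone cannot produce the upper bound $r\le p^j$.
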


\begin{proof}  The image of the map
\begin{displaymath}
BP^{*}(K(\zp,m)) \to H^{*}K(\zp,m)
\end{displaymath}

\noindent is computed by Tamanoi in \cite{Tam:image} (and much
earlier in his 1983 masters thesis in Japan) and then again later in
\cite{RWY}.  In particular, the answer for $m=2$ contains the $z_{i}$,
where $i > 0$.  This map factors through $k(n)^{*}(K_{2})$ so we conclude
that the $z_{i}$ cannot support a differential.

Let $b_{\ell } \in k(n)_{2\ell }\cp$ be the standard generator and
consider the composition
$$
\xymatrix{
\cp \ar[r]^{p} & \cp \ar[r] & K_{2}.
}
$$
Define $b(s) = \sum_{\ell } b_{\ell } s^{\ell }$ and $b_{(i)} =
b_{p^{i}}$.  Note that $b_{(i)}$ maps to $y_{n,i}^{*} \in
k(n)_{*}(K_{2})$.  We follow \cite[Theorem 3.8(ii)]{RW:HR} and use the
fact that for $k(n)$, $[p](s) = v_{n} s^{p^{n}}$.  The composition above
takes $b(s)$ to zero, but the first map takes $b(s) \ra
b(v_{n}s^{p^{n}})$.  In particular, we see that $v_{n}^{p^{i}} b_{(i)}$
maps to zero, giving $v_{n}^{p^{i}} y_{n,i}^{*} = 0 \in
k(n)_{*}(K_{2})$.

Since we must have a $d_{\rho_{n}}(\beta ) = v_{n}^r y_{n,i}^{*}$ with
$r \le p^{i}$, from Theorem \ref{pairing}, we must have a corresponding
$d^\rho_{n}(\alpha ') = v_{n}^r \beta '$ with $|\alpha '| = 2p^{i} =
|y_{n,i}^{*}|$.  
\crg{
We want to show that $y_{n,i} = \alpha'$.
We give the proof for odd primes, $p=2$ requires modifications.
To do this, we use induction on $i$.
We can begin the induction with $i=0$, where $d^1(y_{n,0}) = v_nu_n$, from
(\ref{eq-d1ui}).
We now assume the result for $y_{n,i-1}$. More than that, we assume
Theorem \ref{pd} to the point where we have computed
$d^{\rho_{n}(i-1)}(y_{n,i-1})$ and obtained
$E_{1+\rho_{n} (i-1)}$.
We will be finished if we can show that the only element 
of
$E_{1+\rho_{n} (i-1)}$
that can have a differential on it 
in degree $2p^i$
is $y_{n,i}$.
The only elements that can have differentials are in the $k(n)^*$-free
part of
$E_{1+\rho_{n} (i-1)}$.  In Theorem \ref{pd}, we have $\ell = 2(i-1)$,
so the $k(n)^*$-free part of 
$E_{1+\rho_{n} (i-1)}$
is easy to read off as
\[
E(y_{n,i-1/2})\otimes T_{p^n}(z_{n+i})\otimes P(y_{n,i}) \otimes W_{n,i-1} \otimes
L_n \otimes TZ_{n,i}
\]
The elements of $L_n$ cannot be used because they give us $K(n)^*(K_2)$.
The lowest degree element of $TZ_{n,i}$ is $z_{n+i+1}$ and its degree is
higher than $2p^i$ so we can ignore $TZ_{n,i}$.  The degree of $z_{n+i}$
of $T_{p^n}(z_{n+i})$ is also too high.  All we have left to eliminate
is 
$E(y_{n,i-1/2})\otimes  W_{n,i-1}$. 
The element of lowest degree in $W_{n,i-1}$ is $w_{n,i}$.
For $i \le n$, the degree of $w_{n,i}$ is $2p^{n+i}+1$ 
by (\ref{eq-w})
and too big
to consider.  When $i > n$, we rely on  Lemma \ref{lem-ab}.
Here we have the degree of $w_{n,i}$ is given as
\[
2(c_1 +p^{n+1}c_n) p^{i-n-1} + 2c_n + |w_{n,i-n-1}|.
\]
The term $2p^{n+1}c_n p^{i-n-1} = 2(p^n-1)p^i$, so the degree is too high to
worry about.
All that is left is $y_{n,i-1/2}$, but it has odd degree.  
Although unnecessary at this stage, the degree of $y_{n,i-1/2}$ is
also greater than $2p^i$.
}
\end{proof}

%By the time we get to the point of worrying about
%$y_{n,i}$, it will be the only element in degree $|\alpha '|$ and so
%must be $\alpha '$.

\section{The Adams spectral sequence for odd primes}
\label{thess}
\crb{The $E_{2}$-term of the odd primary Adams spectral sequence for $\k^{*}(K_{2})$ is the subject of Theorem \ref{E2k}.}

\begin{thm}\label{pd}
 {\bf Adams differentials and intermediate terms \crb{for $\k^{*}(K_{2})$}.}

\begin{enumerate}[label={(\roman*)},itemindent=1em]
\item \label{pd-i}
In the odd primary Adams spectral sequence for $\k^{*}(K_{2})$,
the differentials $d^{r}$  for $r\geq 1$ are
\begin{align*}
d^{1} (y_{n,0})
 & = v_{n}u_{n}\\
d^{1} (u_{s})
 & =  \mycases{
v_{n}z_{n-s}^{p^{s}}
      &\mbox{for }0\leq s < n\\
v_{n}z_{s-n}^{p^{n}}
       &\mbox{for }s>2n,} \hspace{-5cm} \\
d^{\rho_{n}(i+1/2)}( y_{n,i+1/2})
 & = v_{n}^{\rho_{n}(i+1/2)} z_{n+i+1} &\mbox{for }i\geq 0\\
\aand
d^{\rho_{n}(i)}(y_{n,i})
 & = v_{n}^{\rho_{n}(i)}w_{n,i}&\mbox{for }i> 0,
\end{align*}

\noindent where $\rho_{n} (\ell /2)$ is as in Lemma \ref{lem-ab}.
\medskip

\item  \label{pd-ii} For each $\ell \geq 0$,
\begin{align*}
E_{1+\rho_{n} (\ell /2)}
  & = E_{\rho_{n} ((\ell +1) /2)}\\
 & =  \bigoplus_{0\leq k \leq \ell }
          \left( S_{n,k /2}\otimes M_{n,k /2}  \right)\\
 &\qquad \oplus \left( k (n)^{*}\otimes
         \left\{\begin{array}{ll}
E (y_{n,(\ell +1)/2})\otimes T_{p^{n}} (z_{n+1+\ell /2})\hspace{-3cm}\\
       &\mbox{for $\ell $ even}\\
T_{p} (y_{n,(\ell +1)/2})\otimes E (w_{n,(\ell +1)/2})\hspace{-3cm}\\
       &\mbox{for $\ell $ odd}\hspace{1.2cm}
               \end{array}\right\}\otimes M_{n,(\ell +1)/2} \right)
\end{align*}

\noindent for  $M_{n,\ell /2}$ and
$S_{n,\ell /2}$ as in (\ref{eq-SM0}).
\end{enumerate}
\end{thm}

In \ref{pd-ii} note that as $\ell $ goes to $\infty $, \crr{both the expressions enclosed
in braces
go to $\Z/p$
and $M_{n,(\ell +1) /2}$  goes to $L_n$.}
Hence the last
summand goes to $ k(n)^{*}\otimes L_n$, and \ref{pd-ii} implies that
the Adams $E_{\infty }$-term is the $k (n)^{*}$-module described in
Theorem \ref{knkn}.

\begin{proof}[Proof of Theorem \ref{knkn} and Theorem
\ref{pd}\ref{pd-ii} assuming Theorem \ref{pd}\ref{pd-i}] The Adams
$E_{1}$-term is
\begin{displaymath}
k (n)^{*}\otimes H^{*}K_{2}.
\end{displaymath}

\noindent  \crbr{Our $d^{1}$ for an odd prime $p$  was computed in \S\ref{subsec-d1}
and our $E_2$ in Theorem \ref{E2k}.
The remaining $k(n)^*$-free part
was $k(n)^*\otimes E(y_{n,1/2})\otimes M_{n,0}$, but
$M_{n,0} = T_{p^n}(z_{n+1})\otimes M_{n,1/2}$,
giving us the answer for the above Adams
$E_2$-term.}

\noindent Higher differentials involve multiplication by higher powers
of $v_{\crr{n}}$, so they cannot affect the torsion submodule.

The remaining Adams differentials, starting with $d^{\rho_{n}(1/2)}
(y_{n,1/2})=v_{n}^{\rho_{n}(1/2)}z_{n+1}$ (where $\rho_{n}
(1/2)=p-1$), have the following effects on the indicated subquotient
rings of $E_{2}$,
\crr{
$E_{1+\rho_{n} (i)}$
and
$E_{1+\rho_{n} (i+1/2)}$
}
for integers $i\geq 0$.
% Defined in ~/math/diary/diary.sty
\begin{numequation}\label{eq-effects}
\begin{split}
y_{n\crr{,}i+1/2}&\mapsto v_{n}^{\rho_{n} (i+1/2)}z_{n+i+1}\\
k (n)^{*}\otimes E (y_{n,i/2})\otimes T_{p^{n}} (z_{n+i+1})
 & \leadsto (k (n)^{*}\otimes E (w_{n,\crbr{n}+i+1}))\\
 &\qquad
      \oplus (T_{\rho_{n} (i+1/2)} (v_{n})
               \otimes z_{n+i+1}T_{p^{n}-1} (z_{n+i+1}))\\
 & = (k (n)^{*}\otimes E (w_{n,\crbr{n}+i+1}))\oplus S_{n,i+1/2}\\
y_{n,i+1}&\mapsto v_{n}^{\rho_{n} (i+1)}w_{n,i+1}\\
k (n)^{*}\otimes T_{p} (y_{n,i+1})\otimes E (w_{n,i+1})
 & \leadsto (k (n)^{*}\otimes E (y_{n,i+3/2}))\\
 &\qquad
      \oplus (T_{\rho_{n} (i+1)} (v_{n})\otimes w_{n,i+1}T_{p-1} (y_{n,i+1}))\\
 & = (k (n)^{*}\otimes E (y_{n,i+3/2}))  \oplus S_{n,i+1}
\end{split}
\end{numequation}%

\noindent These are illustrated by the following diagrams, in which an
arrow $\alpha \to \beta $ labeled by $v_{n}^{r}$ for some $r$ means
that $d^{r}\alpha =v_{n}^{r}\beta $.  Within each of the two diagrams,
all arrows should bear the same label, but all but two labels have been
omitted to avoid clutter.
% Note the use of \hspace commands to position the number correctly.
\begin{numequation}\label{eq-diagrams}
\begin{split}
\hspace{-5mm}
&\xymatrix
@R=10mm
@C=5mm
{
{1}
  &{y_{n,i}}\ar[dl]_(.6){v_{n}^{\rho_{n}(i)}}
    &{y_{n,i}^{2}}\ar[dl]_(.5){}%{v_{n}^{\rho_{n}(i)}}
      &{\dotsb }\ar[dl]_(.5){}%{v_{n}^{\rho_{n}(i)}}
        &{y_{n,i}^{p-2}}\ar[dl]_(.5){}%{v_{n}^{\rho_{n}(i)}}
           &{\hspace{5mm}y_{n,i}^{p-1}\hspace{1cm} }
                  \ar[dl]^(.4){v_{n}^{\rho_{n}(i)}}\\
{w_{n,i}}
  &{w_{n,i}y_{n,i}}
    &{w_{n,i}y_{n,i}^{2}}
      &{\dotsb }
        &{w_{n,i}y_{n,i}^{p-2}}
           &{w_{n,i}y_{n,i}^{p-1}=:y_{n,i+1/2}}
}
\hspace{-1cm}
\\
\hspace{-5mm}
&\xymatrix
@R=4mm
@C=3mm
{
{}\\
{y_{n,i+1/2}}\ar[ddr]_(.4){v_{n}^{\rho_{n}(i+1/2)}}
  &{y_{n,i+1/2}z_{n+i+1}}\ar[ddr]_(.4){}%{v_{n}^{\rho_{n}(i+1/2)}}
    &{\dotsb }\ar[ddr]_(.4){}%{v_{n}^{\rho_{n}(i+1/2)}}
      &{y_{n,i+1/2}z_{n+i+1}^{p^{n}-2}}\ar[ddr]^(.6){v_{n}^{\rho_{n}(i+1/2)}}
         &{y_{n,i+1/2}z_{n+i+1}^{p^{n}-1}=:w_{n,i+n+1}}\\
         % &{y_{n,i+1/2}z_{n+i+1}^{p^{n}-1}}\ar@{=}[r]^(.5){}
         %   &{:w_{n,i+n+1}}\\
{}\\
{1}
  &{z_{n+i+1}}
    &{\dotsb }
      &{z_{n+i+1}^{p^{n}-2}}
         &{z_{n+i+1}^{p^{n}-1}}
}\hspace{-2cm}
\end{split}
\end{numequation}%

\noindent Let $S_{n,i}$ and $S_{n,i+1/2}$ be as in (\ref{eq-SM0}).
Then the new torsion modules created by $d^{\rho_{n} (i)}$ for $i>0$
and $d^{\rho_{n} (i+1/2)}$ for $i\geq 0$ are respectively
\begin{displaymath}
 S_{n,i}\otimes M_{n,i}
\qquad \aand
 S_{n,i+1/2}\otimes M_{n,i+1/2}.
\end{displaymath}

\noindent These give Theorem \ref{pd}\ref{pd-ii}.

This means that the Adams $E_{\infty }$-term has the form indicated in
Theorem \ref{knkn}.  We have to be sure that there are no nontrivial
extensions in $k (n)^{*}$-module structure.

Suppose that for some $i$, $v_{n}^{\rho_{n} (i)}w_{n,i}$ is not zero
but instead has higher filtration than expected.  This would mean
\begin{displaymath}
v_{n}^{\rho_{n} (i)}w_{n,i}=v_{n}^{r_{n,i}}x_{n,i} \qquad
\mbox{for some $x_{n,i}$ with $r_{n,i}>\rho_{n} (i)$.}
\end{displaymath}

\noindent  Then we would have
\begin{displaymath}
v_{n}^{\rho_{n} (i)} (w_{n,i}-v_{n}^{r_{n,i}-\rho_{n} (i)}x_{n,i})=0,
\end{displaymath}

\noindent and we could define
\begin{displaymath}
w'_{n,i}:=w_{n,i}-v_{n}^{r_{n,i}-\rho_{n} (i)}x \qquad \mbox{with }
v_{n}^{\rho_{n} (i)}w'_{n,i}=0
\end{displaymath}
in that filtration of the spectral sequence.
Of course this could also be in a higher filtration,
but this has to end because each degree of Theorem \ref{knkn} is finite.
This follows from $K(n)^*(K_2)$ finite over $K(n)^*$ and $K_2$ being
of finite type.  In the end, our final element
would represent the same element in $E_{\infty }$ as
$w_{n,i}$, so the $k (n)^{*}$-module structure would still be as
stated in Theorem \ref{knkn}.

A similar argument works for the relation $v_{n}^{\rho_{n}
(i+1/2)}z_{n_{i+1}}=0$.
\end{proof}

{\em Overview of the Proof of Theorem \ref{pd}\ref{pd-i}.}
We can assume by induction that we have 
$E_{\rho_{n}(i-1/2)+1}$ 
and we want to get to
$E_{\rho_{n}(i+1/2)+1}$.
There are two parts to the proof.  
We must establish the two differentials, $\rho_n(i)$ and $\rho_n(i+1/2)$,
but at the same time we have to show that there are no other differentials.
The logical way this should go is to show that there are no $d^r$ with
$\rho_n(i-1/2) < r < \rho_n(i)$, then compute $\rho_n(i)$.  After this,
show there are no $d^r$ with $\rho_n(i) < r < \rho_n(i+1/2)$, then compute
$\rho_n(i+1/2)$.  This is not what we do, but it is best to interpret
what we do this way.  It turns out that computing the differentials
and showing there are no other differentials are independent of each other.
Furthermore, it is unnecessary to break up the non-existence of the $d^r$
into two parts because the proof is exactly the same for both parts.
So, what we do is show the two differentials must exist, no matter
if there are other differentials or not.  After that, we show there are
no extra differentials.  The two proofs could go in the opposite order,
or be done in the proper sequence in the way that makes the most sense.  However,
rather than do them in the proper sequence, the reader can interpret the proofs that
way.

%\begin{proof}[Proofs of Theorems \ref{pd}\ref{pd} and \ref{pdh}]
\begin{proof}[Proof of Theorem \ref{pd}\ref{pd-i} ] 
Our proof starts with showing the
asserted differentials must happen.  Then we have to show that there
are no additional differentials.  This is where the full power of {\em
The Pairing} comes in.

% \todo[inline,color=yellow]{7/15/24. {\em The Pairing} is used to
% convert cohomological differentials to homological ones, not to
% exclude unwanted ones.

% 7/25/24. It may be best to prove Theorem \ref{pd} here and worry about
% Theorem \ref{pdh} in the next section.}\bigskip

Assume by induction that we have $E_{\rho_{n}(i-1/2)+1}$. We must have
a $d^{r}(y_{n,i}) = v_{n}^r q$ where $q$ has odd degree and $r \le
p^{i}$ by Lemma \ref{limita}.  There are few odd degree elements in this
range.  We will show that if $q = w_{n,i+1}$, we would have $r > p^{i}$.
This eliminates all $q=w_{n,i+j}$, $j> 1$, because their degree is
even higher.  We want to show
$$
|w_{n,i+1}| -1 -|y_{n,i}| > 2p^{i} (p^{n}-1)
$$
Using the formula for  $w_{n,i+1}$ of (\ref{diff}), the left hand side becomes
\begin{align*}
\lefteqn{|y_{n,i+1}|+1 +2\rho_{n}(i+1)(p^{n}-1) -1 -|y_{n,i}|}
\qquad\qquad\\
 & = 2 (p-1)p^{i}+2\rho_{n}(i+1)(p^{n}-1),
\end{align*}

\noindent which makes the desired inequality
\begin{displaymath}
\rho_{n} (i+1)>\frac{p^{i+n}-p^{i+1}}{p^{n}-1}
     = \frac{p^{i} (p^{n}-p)}{p^{n}-1}
\end{displaymath}

\noindent It is enough to have $\rho_{n}(i+1) > p^{i}$, \crbr{because this 
is larger than the term on the right,} but this is in
 \crr{(\ref{eq-wu}).}

The only remaining elements of odd degree 
\crg{ that have degree less than $|y_{n,i}|+2p^i(p^n-1)$ }
are $y_{n,i}^s w_{n,i}$ for
$s>0$.  However, since we know $w_{n,i} $ meets the {\em Divisibility
Criterion}, we must have $s$ at least $p^{n}-1$.  Then the index of
the differential would be $p^{i} + \rho_{n}(i)$, and this is greater
than $p^{i}$ so can't happen.  We conclude that we {\em must} have
$d^{\rho_{n}(i)}y_{n,i} = v_{n}^{\rho_{n}(i)} w_{n,i}$ as claimed.

Later, when we show there are no extraneous differentials, that
proof actually shows there are no differentials $d^r$ where
$r < \rho_n(i)$, so when we do the computation for this differential,
there is no interference from other possible differentials, because
they do not exist.

Thus, the action of this differential takes place in $k (n)^{*}\otimes  P(y_{n,i})\ot
E(w_{n,i})$ which can be broken up as $k (n)^{*}\otimes P(y_{n,i+1})\ot
T_p(y_{n,i})\ot E(w_{n,i})$.   The
remaining $v_{n}$-torsion free part is $k(n)^{*}\otimes P(y_{n,i+1}) \ot
E(y_{n,i+1/2})$, giving us 
$E_{\rho_{n}(i)+1}$.

By Lemmas \ref{lem-E2} and \ref{limita}, 
we know that
$z_{n+i+1}$ is a permanent cycle and  that
some $v_{n}^r z_{n+i+1}$ must be hit by a differential coming from an
odd degree element.
Remember that we are now working in
$E_{\rho_{n}(i)+1}$.
Furthermore, our proof that there are no other differentials than those
specified shows that there are no differentials $d^r$ with
$\rho_n(i)+1 < r < \rho_n(i+1/2)$.

\begin{lemma}
\label{twice} If $d^{r}(w_{n,i+j})=v_{n}^r z_{n+i+1}$ for some $j >
0$, then $r \le \rho_{n}(i-1/2)$.
\end{lemma}

%\crr{
%\begin{remark}
%Note that the only property we used about $z_{n+i+1}$ was its
%degree.  This will be important later on.
%\end{remark}
%}

\begin{proof}
It is enough to study the $j=1$ case.
\crr{If this differential is too short, then it is even shorter
for $j > 1$.
}
We would have
$$
|w_{n,i+1}| +1 + 2r(p^{n}-1) = |z_{n+i+1}|.
$$
Replace $|w_{n,i+1}|$ using (\ref{diff})
$$
|y_{n,i+1}| + 1 + 2\rho_{n}(i+1)(p^{n}-1) +1 + 2r(p^{n}-1) = |z_{n+i+1}|.
$$
Plugging in the numbers for $y_{n,i+1}$ and $z_{n+i+1}$ and rearranging, we get
$$
  2\rho_{n}(i+1)(p^{n}-1) + 2r(p^{n}-1) = 2p^{n+i+1}  - 2p^{i+1} = 2p^{i+1}(p^{n}-1).
$$
So, $r = p^{i+1} -\rho_{n}(i+1)$.

We need to show this is $\le \rho_{n}(i-1/2)$.
\crr{
We use the formulas from
Lemma \ref{lem-ab}.
We need to show that $p^{i+1} \le \rho_n(i+1) + \rho_n(i-1/2)$.
This is easy for small $i$, so using the formulas and induction,
we need to show
\[
p^{i+n+2} \le \rho_n(i+n+2)+\rho_n(i+n+1/2).
\]
The right hand side is
\[
p^{i+2}(p^n-1) +1 + \rho_n(i+1) + p^{n+i}(p-1)-1 +\rho_n(i-1/2).
\]
Expanding and using induction, this is greater than or equal to
\[
p^{n+i+2} -p^{i+2} + p^{n+i+1} - p^{n+i} + p^{i+1}.
\]
For this to be greater than or equal to $p^{n+i+2}$, we need
\[
p^{n+i+1} +p^{i+1} \ge p^{n+i} + p^{i+2}.
\]
This is obvious for $n > 1$, and we get an equality when $n=1$.
}
\end{proof}

Lemma \ref{twice} rules out all $w_{n,i+j}$ with $j > 0$ as the source
of a differential hitting a $v_{n}$-multiple of $z_{n+i+1}$ because
we assume that $E_{1+\rho_n(i)}$  has already been computed.

Now the only odd degree elements left 
\crg{in degrees less than $|z_{n+i+1}|$}
are the $y_{n,i+1}^s
y_{n,i+1/2}$.  We know $y_{n,i+1/2}$ would work with differential
$\rho_{n}(i+1/2)$ because of the {\em Divisibility Criterion}, Proposition \ref{limit}
and (\ref{diff}).  The
{\em Divisibility Criterion } requires $s $ to be a multiple of $p^{n}
-1$.  The lowest non-zero $s$ is $s=p^{n}-1$ and this would give a
differential of length $\rho_{n}(i+1/2) - p^{i+1}$, but $p^{i+1} >
\rho_{n}(i+1/2)$ so this cannot happen.  We {\em must} have
$d^{\rho_{n}(i+1/2)} ( y_{n,i+1/2}) = v_{n}^{\rho_{n}(i+1/2)}
z_{n+i+1}$.

The part of $E_{\rho_n(i)+1}$ that
the action of $d^{\rho_{n}(i+1/2)}$
takes place in is $ P(v_{n}) \ot E(y_{n,i+1/2}) \ot T_{p^{n}}(z_{n+i+1}) $
and results in the $P(v_{n})$-free part being $E(w_{n,i+(n+1)})$,
giving us \linebreak $E_{\rho_{n}(i+1/2)+1}$.

%\todo[inline,color=yellow]{8/14/24. The next paragraph is about
%homology.}\bigskip

Having computed these differentials, we can use
{\em The Pairing} of Theorem \ref{pairing} to get the dual
differentials for the {\ASS} for $k(n)_{*}(K_{2})$ in Theorem
\ref{pd-dual}.  We first show $d_{\rho_{n}(i)}(w^{*}_{n+i}) =
v_{n}^{\rho_{n}(i)} y^{*}_i$.  We know, from Lemma \ref{limita} that
some differential must hit some $v_{n}^r y^{*}_i$ with $r \le p^{i}$.
From {\em The Pairing}, we know that some element, $q$, in the degree
of $y^{*}_i$ must have $d_{\rho_{n}(i)}(m) = v_{n}^{\rho_{n}(i)} q$.
However, in $E_{\rho_{n}(i-1/2)}$, we see that $y^{*}_i$ is the only
element there is in that degree and $w^{*}_{n+i}$ is the only odd
degree generator in the correct degree.  The only option is the
expected result.  Again, the pairing gives us a $d_{\rho_{n}(i+1/2)}$
in degrees corresponding to $z^{*}_{n+i+1}$ and $w^{*}_{n+i+1/2}$.
There are no other options, so $d_{\rho_{n}(i+1/2)}$ is as advertised.

Having computed these two must-have differentials, it gives us the
description of $E_{\rho_{n}(i+1/2)+1}$ of Theorem \ref{pd}\ref{pd-ii}.
% \begin{equation}
% E_{\rho_{n}(i+1/2)+1}=
% k(n)^{*}\otimes P(y_{n,i+1})
% \otimes
% E(w_{n,i+i + 1}:0\le i\le n)
% \otimes
% T_{p^{n}}(z_{n+i+s+2}:s\geq 0)
% \end{equation}

%\todo[inline,color=yellow]{8/14/24. Stopped here today.}\bigskip

We are not finished. {\em We must show that there are no
extraneous $d^r$s.}  We assume, by induction, that we have
$E_{\rho_{n}(i-1/2)+1}$.  
The first step is to show there are no differentials with
$\rho_{n}(i-1/2)+1< r < \rho_{n}(i)$.  Any such differential would
take place on  
$E_{\rho_{n}(i-1/2)+1}$.  
There are no differentials on the $z$'s because they are permanent
cycles.  The element $y_{n,i}$ is reserved for our special 
differentials as is $w_{n,i}$.  The only possible differentials are on the
$w_{n,i+j}$ with $1 \le j \le n$.  If we show there are no such
differentials, we get our $ \rho_n(i)$.  
Then we have to consider $d^r$ with $\rho_{n}(i)+1 < r < \rho_n(i+1/2)$.  
These would take place on $E_{\rho_n(i)+1}$.  Here we cannot use
$y_{n,i+1/2}$ or $z_{n+1+i}$ because they are reserved for the special
differentials already found to be necessary.  The remaining options
are $y_{n,i+1}$ and 
$w_{n,i+j}$ with $1 \le j \le n$. But this is the same as we had before
with the exception of $y_{n,i+1}$.  This is easy to eliminate because the lowest
odd degree element is $w_{n,i+1}$.  From our computation of the special
differentials, we know this would require (from (\ref{diff})) $\rho_n(i+1) > \rho_n(i+1/2)$, so
the differential would be too long.  We can now concentrate on showing
there are no extra differentials $d^r$ with $\rho_n(i-1/2) < r \le \rho_n(i+1/2)$
that start on a $w_{n,i+j}$, $1 \le j \le n$.

Let $r$ be the smallest $r$ in the range
$\rho_{n}(i-1/2) < r \le \rho_{n}(i+1/2)$ with $d^r (w_{n,i+j}) =
v_{n}^r \beta \ne 0$, for the $w_{n,i+j}$ of smallest degree.  We
know from Theorem \ref{pairing}, {\em The Pairing}, that there is a
$\beta'$, with $|\beta'| = |\beta|$, in the homology {\ASS} with
$d_r(\beta ')= v_{n}^r \alpha' \ne 0$.  If $\beta'$ is decomposable,
then there must be an element, $\beta''$, with lower degree than
$\beta'$ with $d_r(\beta'') = v_{n}^r \alpha'' \ne 0$.  For
example, if $\beta' = ab$, then $d_r(\beta ') = d_r(a)b\pm ad_r(b)$
and either $d_r(a)$ or $d_r(b)$ is non-zero.  In either case, we get
our $\beta''$ with degree less than $|\beta'|$.  Again, by {\em The
Pairing}, there is an $\alpha$ in the cohomology {\ASS} with $|\alpha
|=|\alpha''| < |w_{n,i+j}|$ with $d^r(\alpha ) \ne 0$.  This
contradicts our choice of $w_{n,i+j}$.  We conclude that if there is
such an $r$, $\beta'$ is indecomposable.  Theorem \ref{pairing}, {\em
The Pairing}, is pretty vague about what the corresponding elements
are.  All it really gives us are degrees.

Since we started with the odd degree
$w_{n,i+j}$, we are looking for an even degree target element.
However, we know where all the even degree  indecomposables
are in the homology spectral sequence,  dual to 
$E_{\rho_n(i-1/2)+1}$.
These elements are the
$y^{*}_{s}$, $s \ge i$, and the $\gamma_{p^k}(z^{*}_{n+i+s})$, with $k < n$ and
$s > 0$.  .  We have
similar looking elements in 
$E_{\rho_n(i-1/2)+1}$,
namely, $y_{n,s}$, $s \ge i$,  and $z_{n+i+s}^{p^k}$, $s > 0$, $ k < n$.
They are not known to be "dual" in any sense, but they are in the
right degrees.  All we will use about these cohomology elements is
their degree.  If we can show that there are no differentials that hit
elements in these degrees, we are done.
We overlooked some elements in our original proof, but a very persistent referee
forced us to find them.  This led to a complete reworking of the proof,
a dramatic improvement.

We have three main ways to show a differential cannot exist.  (1) We can use
the {\em Divisibility Criterion}, (2) we can show that a prospective $d^r$
has $ r > \rho_{n}(i+1/2)$, or (3) we can show that $r \le \rho_{n}(i-1/2)$.

First we have to check to see if there is some $s$ with
$d^r (w_{n,i+j}) = v_{n}^r y_{n,s}$, again, we repeat, only using the
degree of $y_{n,s}$.  
(If we could actually use $y_{n,s}$ this would be easy because we know
it is a source and cannot be a target.)
For this, we must have
$$
|w_{n,i+j}|+1+2r(p^{n}-1)= |y_{n,s}|
$$
but we can replace the first term using (\ref{diff})
$$
|y_{n,i+j}|+1+2\rho_n(i+j)(p^{n}-1)
+1+2r(p^{n}-1)= |y_{n,s}|
$$
so
$$
 |y_{n,s}| -|y_{n,i+j}| -2 = 2p^s - 2p^{i+j} -2
$$
is both positive and divisible by $2(p^{n}-1)$.  
This cannot be zero mod $2(p^{n}-1)$ so the {\em Divisibility Criterion}
tells us we cannot have this differential.

The elements $z_{n+i+s}^{p^k}$ below are in degrees that correspond to
the degrees of the remaining even degree generators in the homology version.
We have to show, using only their degrees, that there is {\bf no} differential
\begin{align}
\begin{split}
\label{allcontorted}
d^r ( w_{n,i+j})& = v_{n}^r z_{n+i+s}^{p^k}
\quad \text{with} \\
0 <  j \le n, \   0 < s, &  \  0 \le k < n, \ 0 <  i  \\
   \text{and}\
\rho_{n}(i-1/2)& <   r \le \rho_{n}(i+1/2).
\end{split}
\end{align}

We have
$$
|w_{n,i+j}| + 1 +  2r(p^n-1) = | z_{n+i+s}^{p^k}|
$$
We replace $|w_{n,i+j}|$ with $|y_{n,i+j}|+1+2\rho_{n}(i+j)(p^{n}-1)$
from (\ref{diff})
so we have

$$
|y_{n,i+j}|+1+2\rho_{n}(i+j)(p^{n}-1)
+1+2r(p^{n}-1) =
| z_{n+i+s}^{p^k}|.
$$
Turning this into numbers and rearranging,
\begin{align*}
2r(p^n-1)& = | z_{n+i+s}^{p^k}| -2 
-|y_{n,i+j}|
-2\rho_{n}(i+j)(p^{n}-1) \\
&=2 p^{n+i+k+s} + 2p^k -2 - 2p^{i+j} 
-2\rho_{n}(i+j)(p^{n}-1)
\end{align*}

First we ask, when is this too big, that is, when is
$$
2\rho_n(i+1/2)(p^n-1) < 2p^{n+i+k+s} + 2p^k -2 - 2p^{i+j} -2\rho_{n}(i+j)(p^{n}-1)
$$
Rearranging, when is
$$
2\rho_n(i+1/2)(p^n-1) +2 + 2p^{i+j} +2\rho_{n}(i+j)(p^{n}-1) < 2p^{n+i+k+s} + 2p^k 
$$
From (\ref{eq-wu}) we know $\rho_n(i+j) \le p^{i+j}$, so when is
$$
2\rho_n(i+1/2)(p^n-1) +2 + 2p^{i+j} +2p^{i+j}(p^{n}-1) < 2p^{n+i+k+s} + 2p^k.
$$
The large terms on left and right are the $2p^{n+i+j}$ and $2p^{n+i+k+s}$.
So this inequality holds when $n+i+j < n+i+k+s$,  $j < k+s$, or $s > j -k$.

What we have left is  $s \le j-k$, or $s = j-k -t$ with $t \ge 0$.  We
can't have $t$ too big because $j \le n$.  Using the above approach, it is
easy to see that the differential is too short if
$j > k+s+1$, or $s < j-k -1$.  Unfortunately, that misses a couple of cases,
namely $s=j-k -\epsilon$ when $\epsilon$ is 0 or 1.  Those cases are
 more delicate, but since they  are also too short, we do them all at once.

When our differential is too short, we have
$$
2\rho_n(i-1/2)(p^n-1) \ge  2p^{n+i+k+s} + 2p^k -2 - 2p^{i+j} -2\rho_{n}(i+j)(p^{n}-1)
$$
Substitute $s=j-k-t$ to get
$$
2\rho_n(i-1/2)(p^n-1) \ge  2p^{n+i+j-t} + 2p^k -2 - 2p^{i+j} -2\rho_{n}(i+j)(p^{n}-1)
$$
When $i+j < n+1$ we can compute all the numbers and show this is true easily, so
we will assume $i+j \ge n+1$.
Rearrange and use Lemmas \ref{lem-f} and \ref{lem-ab} to get
\begin{align*}
2\rho_n(i-1/2)(p^n-1) 
&+2 + 2p^{i+j}  \\
+2\Big(p(p^n-1)p^{i+j-n-1} +& \rho_n(i+j-n-1) + 1\Big)
(p^{n}-1) \\
 \ge  
2p^{n+i+j-t}& + 2p^k 
\end{align*}
Replace $\rho_n(i-1/2) $ with $p^i-\rho_n(i-1)$ and multiply everything out and rearrange to 
get the left hand side as
$$
2p^{n+i}+2\rho_n(i-1) +2 + 2p^{n+i+j} + 2p^{i+j-n} + 2\rho_n(i+j-n-1)p^n +2p^n
$$
and the right hand side
$$
2p^i+ 2\rho_n(i-1)p^n + 2p^{i+j} + 2\rho_n(i+j-n-1) +2p^{n+i+j-t}+2p^k.
$$
Because we know $\rho_n(i-1) \le p^{i-1}$, $k < n$, and $\rho_n(i+j-n-1) \le p^{i+j-n-1}$,
the largest term on the left is the $2p^{n+i+j}$ and the inequality holds
when $t > 0$.  When $t=0$ we can cancel the big terms on both sides and we
have the left side is
$$
2p^{n+i}+2\rho_n(i-1) +2 +  2p^{i+j-n} + 2\rho_n(i+j-n-1)p^n +2p^n
$$
and the right
$$
2p^i+ 2\rho_n(i-1)p^n + 2p^{i+j} + 2\rho_n(i+j-n-1)  +2p^k.
$$
The largest term on the left is now $2p^{n+i}$.  On the right, the largest is
$2p^{i+j} $ when $j=n$.  When this happens those two terms cancel and we are
looking at
$$
2\rho_n(i-1) +2 +  2p^{i} + 2\rho_n(i-1)p^n +2p^n
\ge 
2p^i+ 2\rho_n(i-1)p^n  + 2\rho_n(i-1) +2p^k
$$
where lots of things cancel to give us
$$
2 +2p^n
\ge 
2p^k
$$
which is true because $k < n$.

This concludes all of the cases we needed to check.
There are no more differentials than those already produced.

Of course if there are no more differentials in the {\ASS} for
$k(n)^{*}(K_{2})$, then {\em The Pairing } says there are no more for
$k(n)_{*}(K_{2})$.
\end{proof}

%\section{The Dual of everything}
\crb{\section{From cohomology to homology}
\label{alldual}
}

\crb{We now turn from $k(n)^{*}(K_{2})$ to $k(n)_{*}(K_{2})$.} We have
already given $H_{*}K_{2}$ in (\ref{hom}).  We need the $Q_n$ homology
of $H_{*}K_{2}$, but it is just dual to the $Q_n$ homology of
$H^{*}K_{2}$ described in Section \ref{qhomsec}.  It gives us the
$E_{2}$ term of the {\ASS} for $k(n)_{*}(K_{2})$,
\crb{which we spell out in Theorem \ref{pd-dual}\ref{pd-dual0}.}

We give the $E_{2}$ term of the {\ASS} for $k(n)_{*}(K_{2})$ and
describe all the differentials \crb{in Theorem \ref{pd-dual}}, and give the
final result as a $k(n)_{*}$-module  \crb{in Theorem \ref{knkn-dual}}.  The
proofs are dual to the proofs for $k(n)^{*}(K_{2})$.
% and are actually,
% of necessity, carried out simultaneously with those.

%\todo[inline,color=yellow]{7/13/24. This section needs more work.}\bigskip

\crb{Using the notation of (\ref{eq-hom-case}), we have elements
% Defined in ~/math/diary/diary.sty
\begin{numequation}\label{eq-dual-gens}
\begin{split}
%\begin{array}{rlrl}
v_{n} &\in G_{2}^{2(p^{n}-1),1}=E_{2}^{1,2(p^{n}-1)+1},\\
y_{n,j}^{*}
      &\in G_{2}^{2p^{j},0}=E_{2}^{0,2p^{j}},\\
w_{n,i}^{*}
      &\in G_{2}^{2p^{n+i}+1,0}=E_{2}^{0,2p^{n+i}+1}\\
\aand
z_{j,s}^{*}=\gamma_{p^{s}} (z_{j}^{*})
      &\in G_{2}^{2p^{s}(p^{j}+1),0}=E_{2}^{0,2p^{s}(p^{j}+1)}.
%\end{array}
\end{split}
\end{numequation}%

The dual analog of (\ref{eq-convenient}) is
% Defined in ~/math/diary/diary.sty
\begin{numequation}\label{eq-convenient-dual}
\begin{split}
EE_{n}^{*}
 & := E (u_{s}^{*}:0\leq s<n)\otimes E (u_{2n+s}^{*}: s>0) ,\\
W_{n,i}^{*}
 & :=E (w_{n,i+s}^{*}:1\leq s\leq n), \\
L_{n}^{*}
 &: = \bigotimes_{0<s <n}\Gamma _{p^{n-s }} (z_{s }^{*}),\\
TZ_{n,i}^{*}
 & := \Gamma _{p^{n}} (z_{n+s}^{*}:s>i),\qquad \mbox{  and }\\
PZ_{n} &:= \Gamma  (\gamma_{e_{n} (s)} (z_{s}^{*}):s>0)
\qquad \mbox{with $e_{n} (s)$ as in (\ref{eq-convenient}).}
\end{split}
\end{numequation}%

We also need dual analogs of (\ref{onlyd1}) and (\ref{eq-SM0}).
\begin{numequation}\label{eq-SM0-dual}
\begin{split}
D_{1}^{*}
 &\coloneqq  \Gamma _{p} (y_{n,0}^{*})
  \otimes E(w_{n,0}^{*})
\otimes EE_{n}^{*}
    \otimes PZ_{n}^{*},\\
S_{n,0}^{*}
 &:= D_{1}^{*}, \\
M_{n,i}^{*}
 &: = \Gamma  (y_{n,i+1}^{*})\otimes W_{n,i}^{*}
          \otimes L_{n}^{*}\otimes TZ_{n,i}^{*}
\qquad \mbox{for $i\geq 0$},\\
M_{n,i+1/2}^{*}
 & :=\Gamma  (y_{n,i+1}^{*})\otimes W_{n,i}^{*}
           \otimes L_{n}^{*}\otimes  TZ_{n,i+1}^{*}
        \quad \mbox{for }i\geq 0 ,\\
S_{n,i}^{*}
 &:= T_{\rho_{n}(i)}(v_{n})\ot y_{n,i}^{*}
             \overline{\Gamma _{p-1}(y_{n,i}^{*})} ,
       \\
 & \phantom{:} = T_{\rho_{n}(i)}(v_{n})\ot
\left\{\gamma_{s}(y_{n,i}^{*}):1\leq s\leq p-1 \right\}
  \quad \mbox{for }i>0 ,\\
\aand
S_{n,i+1/2}^{*}
 &:= T_{\rho_{n}(i+1/2)}(v_{n})
       \ot z_{n+i+1}^{*} \overline{\Gamma _{p^{n}-1}(z_{n+i+1}^{*})} \\
 &\phantom{:} = T_{\rho_{n}(i+1/2)}(v_{n})\ot
 \left\{\gamma_{s}(z_{n+i+1}^{*}):1\leq s\leq  p^{n}-1 \right\}
         \quad \mbox{for }i\geq 0.
\end{split}
\end{numequation}%

Using this notation, we can now state the dual of Theorems \ref{E2k}
and \ref{pd}.\medskip

\begin{thm}\label{pd-dual}
 {\bf The Adams $E_{2}$-term, differentials and intermediate terms
for\linebreak $\k_{*}(K_{2})$.}

\begin{enumerate}[label={(\roman*)},itemindent=1em]
\item \label{pd-dual0}
The $E_{2}$ term of the odd primary Adams
spectral sequence for $\k_{*}(K_{2})$ is
\begin{displaymath}
P(v_{n})\otimes L_{n}^{*}
\ot \Gamma (y_{n,1}^{*})
\ot
E(y_{n,1/2}^{*})
\otimes W_{n,0}^{*}
\otimes  TZ_{n,0}^{*}
\end{displaymath}

\noindent plus a computable family of filtration-0 $\Z/p$'s
annihilated by $v_{n}$ coming from the dual of the $\E$-free part of
$H_{*}K_{2}$, specified in Theorem \ref{knkn}.

\bigskip
\item \label{pd-duali}
In the odd primary Adams spectral sequence for $\k_{*}(K_{2})$,
the differentials $d_{r}$  for $r\geq 1$ are
\begin{align*}
d_{1} (u_{n}^{*})
 & = v_{n}y_{n,0}^{*},\\
d_{1} (\gamma_{p^{s}} (z_{n-s}^{*}))
 & = v_{n}u_{s}^{*} &&\mbox{for }0\leq s < n,\\
d_{1} (\gamma_{p^{n}} (z_{s-n}^{*}))
 & = v_{n}u_{s}^{*} &&\mbox{for }s>2n,\\
d_{\rho_{n}(i+1/2)} (z_{n+i+1}^{*})
 & = v_{n}^{\rho_{n}(i+1/2)}  y_{n,i+1/2}^{*}
                    &&\mbox{for }i\geq 0,\\
\aand
d_{\rho_{n}(i)}(w_{n,i}^{*})
 & = v_{n}^{\rho_{n}(i)}y_{n,i}^{*}
                    &&\mbox{for }i> 0.
\end{align*}

\bigskip

\item  \label{pd-dualii} For each $\ell \geq 0$,
\begin{align*}
E_{1+\rho_{n} (\ell /2)}
  & = E_{\rho_{n} ((\ell +1) /2)}\\
 & =  \bigoplus_{0\leq k \leq \ell }
          \left( S_{n,k /2}^{*}\otimes M_{n,k /2}^{*}  \right)\\
 &\qquad \oplus \left( k (n)^{*}\otimes
         \left\{\begin{array}{ll}
E (y_{n,(\ell +1)/2}^{*})
    \otimes \Gamma _{p^{n}} (z_{n+1+\ell /2}^{*})\hspace{-3cm}\\
       &\mbox{for $\ell $ even}\\
\Gamma _{p} (y_{n,(\ell +1)/2}^{*})
    \otimes E (w_{n,(\ell +1)/2}^{*})\hspace{-3cm}\\
       &\mbox{for $\ell $ odd}\hspace{1.2cm}
               \end{array}\right\}\otimes M_{n,(\ell +1)/2}^{*} \right)
\end{align*}

\noindent for  $M_{n,\ell /2}^{*}$ and
$S_{n,\ell /2}^{*}$ as in (\ref{eq-SM0-dual}).
\end{enumerate}
\end{thm}

The dual of Theorem \ref{knkn} is

\begin{thm}\label{knkn-dual}
For an odd prime $p$,   $\k_{*}(K_{2})$
has the following three summands as a $k(n)_{*}$-module:
\begin{enumerate}[label={(\roman*)},itemindent=1em]

\item \label{knkn-duali} The {\em $k (n)_{*}$ free summand},
$k(n)^{*}\otimes L_{n}^{*}$, for $L_{n}$ as in (\ref{eq-convenient-dual}).

\item \label{knkn-dualii}  The {\em higher torsion summand},
\begin{displaymath}
 \bigoplus_{\ell >0}\left( M_{n,\ell /2}^{*}
        \otimes  S_{n,\ell /2}^{*} \right),
\end{displaymath}

\noindent for $M_{n,\ell /2}^{*}$ and $S_{n,\ell /2}^{*}$ as in
(\ref{eq-SM0-dual}).

\item \label{knkn-dualiii} The {\em elementary torsion summand},
$S_{n,0}^{*}\otimes M_{n,0}^{*}$ as in (\ref{eq-SM0-dual}).

\end{enumerate}
\end{thm}

These follow from Theorems \ref{pd} and \ref{knkn} using the duality
of Theorem \ref{pairing}.  }

%\todo[inline,color=yellow]{1/7/25. Stopped here today.}\bigskip

\section{Modifications for $p=2$}
\label{mod2}

All we do in this section is to lay out the results for $k(n)^{*}(K_{2})$
for $p=2$.  We skip the homology version and proofs.  We do this
with a twinge of guilt.  The very first case done was the
$p=2$, $n=1$ case, and there, the generally useful {\em Divisibility
Criterion } is worthless.  Consequently, there are lots of little
{\em ad hoc} arguments that must be done in that case.

For $p=2$, $H^{*}K_{2} = P(\iota_{2})\otimes_{i\ge 0} P(u_i)$, with $u_i =
Q_i \iota_{2}$.  We let $u_i^2 = z_{i+1} = Q_{i+1}Q_0 \iota_{2}$
($z_0=0$).  In an attempt to be as similar as possible with notation,
we have
\begin{align*}
| u_i |
 &  = 2\times 2^{i}+1\\
\aand | z_i |
 & = 2(2^{i} +1)\qquad \mbox{for }i>0\\
| Q_n |
 &  = 2\times 2^{n} -1 \\
y_{n,j}
  = \iota_{2}^{2^{j}}\qquad \mbox{for $j\geq 0$ in degree $2 \times 2^{j}$}
\end{align*}

\noindent  with
\begin{align*}
Q_{n}u_{i}
 & = \mycases{
(u_{n-i-1})^{2^{i+1}}
    = z_{n-i}^{2^{i}}
       &\mbox{for }0 \le i <  n\\
0      &\mbox{for }n\leq i\leq 2n\\
(u_{i-n-1})^{2^{n+1}} = z_{i-n}^{2^{n}}
       &\mbox{for } i>2n
}\\
Q_n y_{n,0}
 &   =   u_n
\end{align*}

\noindent
The formulas used for odd primes mostly work here
\begin{align*}
w_{n,i}
 & := \mycases{
u_{n}
       &\mbox{for }i=0\\
u_{n+i} + u_{n-i}  z_{i}^{2^{n} - 2^{n-i}}
       &\mbox{for }0 < i \le n\\
%u_{2n+1}+  y_{n,0} w_{n,0}z_{n+1}^{2^{n}-1}
u_{2n+1}+  y_{n,1/2} z_{n+1}^{2^{n}-1}
       &\mbox{for }i=n+1\\
%y_{n,i-n-1} w_{n,i-n-1}
y_{n,i-n-1/2}z_{i}^{2^{n}-1}
       &\mbox{for }i>n+1
}\\
y_{n,j+1/2}
 &: = y_{n,j} w_{n,j}.
\end{align*}

\noindent The only difference between the above and (\ref{eq-w}) is the definition of $w_{n,n+1}$, which here includes $u_{2n+1}$.  For odd primes we have
\begin{displaymath}
d^{1}u_{n+1} = v_{n}z_{n+1}^{p^{n}},
\qquad \aand
d^{p-1}y_{n,1/2}=v_{n}^{p-1}z_{n+1}
\end{displaymath}

\noindent making the heuristic expression
\begin{displaymath}
w_{n,n+1}:=y_{n,1/2}z_{n+1}^{p^{n}-1}-v_{n}^{p-2}u_{n+1}
\end{displaymath}

\noindent a cycle.  We do not see the second term for $p>2$ because it
has higher filtration, but for $p=2$ both terms have filtration 0.
%\end{displaymath}

\noindent
% \noindent
% $$
% w_{n,i} = u_{n+i} + u_{n-i} z_{i}^{2^{n} - 2^{n-i}}
% \quad 0 \le i \le n
% \qquad
% y_{n,j+1/2} = y_{n,j} w_{n,j}
% $$
% $$
% w_{n,(n+1)} = u_{2n+1}+  y_{n,0} w_{n}z_{n+1}^{2^{n}-1}
% \quad
% w_{n,j+(n+1)} =  y_{n,j} w_{n,j}z_{n+j+1}^{2^{n}-1}
% \quad
% j > 0
% $$

% In the {\ASS} for odd primes, we had  $d^1(y_{n,0}) = v_{n}w_n$ so that both
% $y_{n,0}$ and $w_n$ were part of the $\E$-free part of $E_{2}$.
% In the $p=2$ case we have $d^1(y_{n,0}w_n) = v_{n}w_n^2 = v_{n}z_{n+1}$
% so  $z_{n+1}$, and all its powers, are also in the $\E$-free part. In general, we have
% $d^1(y_{n,0} w_n^k) = v_{n}w_n^{k+1}$.
% Because we would normally have $w_{n,(n+1)} = y_{n,0} w_n z_{n+1}^{2^{n}-1}= y_{n,0} w_n^{2^{n+1}-1}$,
% giving $d^1(w_{n,(n+1)}) = v_{n}z_{n+1}^{2^{n}}$, we would not have $w_{2n+1}$.
% Making the adjustment above allows us to keep a new version of $w_{2n+1}$
% with $d^1(w_{2n+1}) = Q_n(w_{2n+1})=0$.

We can compute the $Q_{n}$-homology of $H^{*}K_{2}$ with a diagram like that of (\ref{eq-bigraded}).

% Rewriting $H^{*}K_{2}$ at $p=2$
% \begin{displaymath}
% \begin{array}[]{c}
% \Big(E(y_{n,0}) \ot P(w_n)\Big)
%  \otimes \Big(E(u_i:0\le i < n){\ot} P(z_{n-i}^{2^{i}}:0\le i < n)\Big)\\
% \otimes \Big(E(u_{n+(n+1)+i}:i>0) {\ot} P((z_{n+1+i})^{2^{n}}:i>0)\Big)\\
% \otimes \bigotimes_{0<i<n}T_{2^{i}}z_{n-i}
% \end{array}
% \end{displaymath}

% \noindent
% $$
% \Big(E(y_{n,0}) \ot P(w_n)\Big)
% \ot \Big(E(u_i)\underset{0\le i < n}{\ot} P(z_{n-i}^{2^{i}})\Big)
% \ot \Big(E(u_{n+(n+1)+i}) \underset{0 < i}{\ot} P((z_{n+1+i})^{2^{n}})\Big)
% $$
% $$
% \underset{0 < i < n}{\ot}T_{2^{i}}z_{n-i}
% \ot
% P(y_{n,1})
% \underset{0 \le i \le  n}{\ot}T_{2^{n+1}}(w_{n,i+1})
% \underset{0 < s}{\ot} T_{2^{n}}(z_{n+(n+1)+s+1})
% $$

% The  $Q_n$ homology  is
% $$
% \underset{0 < i < n}{\ot}T_{2^{i}}z_{n-i}
% \ot
% P(y_{n,1})
% \underset{0 \le i \le  n}{\ot}T_{2^{n+1}}(w_{n,i+1})
% \underset{0 < s}{\ot} T_{2^{n}}(z_{n+(n+1)+s+1})
% $$

\begin{thm}\label{E2k2}
We have elements $v_{n}\in G^2_{-2(2^{n}-1),1}$, $y_{n,1} \in G^2_{4,0}$,
$w_{n,i} \in G^2_{2^{n+i+1}+1,0}$,
and $z_{j} \in G^2_{2^{j+1}+2,0}$.
The $E_{2}$ term of the $p=2$ Adams spectral sequence for $\k^{*}(K_{2})$ is
\begin{displaymath}
\begin{array}[]{c}
P (v_{n})\otimes
\displaystyle{\bigotimes_{0<i<n}T_{2^{i}}z_{n-i} \otimes P (y_{n,1})}
\\
{\ot} T_{2^{n+1}}(w_{n,i+1}:0 \le i \le n )
\otimes T_{2^{n}}( z_{2n+1+i+1} :i>0 )
\end{array}
\end{displaymath}

\noindent plus a computable family of filtration-0 $\zt$'s annihilated
by $v_{n}$ coming from the $\E$-free part of $H^{*}K_{2}$.
\end{thm}

For convenience we reset $z_{n+i+1} = w_{n,i}^2$ for $0 < i \le n+1$.

\begin{prop}
\label{pd2}
For $p=2$, the differentials in the $p=2$ Adams spectral
sequence for $\k^{*}(K_{2})$ are:
\begin{enumerate}[label={(\roman*)},itemindent=0em]
\item \label{pd2i}
For $0 < j \le  n+1$, $\rho_{n}(j) = 2^{j} = \rho_{n}(j+1/2)$.  Although $y_{n,j+1/2} = y_{n,j} w_{n,j}$, for $j \le n+1$, this is not a generator.
\begin{align*}
d^{2^{j}}(y_{n,j})
 & = v_{n}^{2^{j}} w_{n,j} \\
\aand
d^{2^{j}}(y_{n,j} w_{n,j})
 &  =
v_{n}^{2^{j}} w_{n,j}^2
=
v_{n}^{2^{j}} z_{n+j+1}.
\end{align*}

\item \label{pd2ii}

For $j>n+1$,
\begin{align*}
d^{\rho_{n}(j)}(y_{n,j})
 & = v_{n}^{\rho_{n}(j)}w_{n,j}   \\
\aand
d^{\rho_{n}(j+1/2)}(y_{n,j+1/2})
 &  = v_{n}^{\rho_{n}(j+1/2)} z_{n+j+1}.
\end{align*}

\item \label{pd2iii}
 For $0 <  j \le  n+1$, $\rho_{n}(j)=2^{j}=\rho_{n}(j+1/2)$.
Ignoring the permanent
free terms and the previously created $v_{n}$-torsion,
\begin{align*}
E_{2^{j}+1}
 & = k(n)^{*}\otimes P(y_{n,j+1})
     \otimes \underset{j \le i \le n}{\bigotimes } T_{2^{n+1}}(w_{n,i+1}) \\
 &\qquad
    \otimes  \underset{0 \le  i < j}{\bigotimes}E(w_{2n+2+i})
   \otimes  \underset{0 <  s}{\bigotimes} T_{2^{n}}(z_{2n+2+s}).
\end{align*}

\noindent

\item \label{pd2iv}
For $n+1 < j$,
\begin{align*}
E_{\rho_{n}(j)+1}
 & =  k(n)^{*}\otimes P(y_{n,j+1})
\otimes
E(y_{n,j+1/2})\\
 & \qquad
\otimes  \underset{0<i\le n}{\bigotimes} E(w_{n,j+i})
 \otimes  \underset{0 \le s}{\bigotimes} T_{2^{n}}(z_{n+j+s+1})
\end{align*}
\end{enumerate}

\end{prop}

We could rewrite $T_{2^{n+1}}(w_{n,i+1})$ as $E(w_{n,i+1})\ot
T_{2^{n}}(z_{n+i+2})$ for $0 \le i \le n$.  If we did that, we could
write proposition \ref{pd2} without the exceptional cases.  Since our
interest is in the $k(n)^{*}$-module structure and not so much in the
multiplicative structure, we do this for our final result.

\begin{thm}\label{knASS}
The $2$-primary $k(n)^{*}(K_{2})$ as a $k(n)^{*}$-module is the sum of the
following three summands:
$$
P(v_{n})\otimes  \underset{0 < i < n}{\bigotimes} T_{2^{n-i}}(z_{i+1})
$$

\begin{displaymath}
\bigoplus_{j>0}\left( T_{\rho_{n}(j)}(v_{n})\ot P(y_{n,j+1})\ot \Ebar(w_{n,j})
\otimes \underset{0<i \le n}{\bigotimes} E(w_{n,j+i})
 \otimes \underset{s\geq 0 }{\bigotimes}T_{2^{n}}
(z_{n+j+s+1}) \right)
\end{displaymath}

\noindent
\begin{align*}
&\bigoplus_{j>0}
\left(T_{\rho_{n}(j+1/2)}(v_{n})\ot P(y_{n,j+1})
     \otimes \underset{0 < i \le n}{\bigotimes}  E(w_{n,j+i})\right.\\
& \qquad\qquad\qquad\qquad\qquad\left.
     \otimes \overline{TP}_{2^{n}}(z_{n+j+1})
    \otimes \underset{s>0 }{\bigotimes}T_{2^{n}} (z_{n+j+s+1}) \right)
\end{align*}

\noindent
% $$
% \bigoplus_{j>0}
% \left(T_{\rho_{n}(j+1/2)}(v_{n})\ot P(y_{n,j+1})
%      \otimes \underset{0 < i \le n}{\bigotimes}  E(w_{n,j+i})
% \otimes \overline{TP}_{2^{n}}(z_{n+j+1})
%     \otimes \underset{s>0 }{\bigotimes}T_{2^{n}} (z_{n+j+s+1}) \right)
% $$

\noindent plus a computable family of $\zt$'s annihilated by $v_{n}$
coming from the $\E$-free part of $H^{*}K_{2}$.
\end{thm}

\end{document}